\documentclass[10pt]{amsart}
\usepackage{fullpage,amssymb,amsmath,hyperref,color,enumitem,caption,cite,comment}
\usepackage[abs]{overpic}
\pdfoutput=1

\setlength\parindent{0pt}
\setlength\parskip{5pt}

\newcommand{\bbC}{\mathbb{C}}

\newcommand{\bbN}{\mathbb{N}}

\newcommand{\bbQ}{\mathbb{Q}}

\newcommand{\bbZ}{\mathbb{Z}}

\newcommand{\calK}{\mathcal{K}}

\newcommand{\calO}{\mathcal{O}}

\newcommand{\OF}{\calO_F}

\newcommand{\QQbar}{\overline{\bbQ}}

\newcommand{\Zbar}{\overline{\bbZ}}

\renewcommand{\hbar}{\overline{h}}

\newcommand{\frakp}{\mathfrak{p}}

\newcommand{\ord}{\operatorname{ord}}

\newcommand{\Spec}{\operatorname{Spec}}

\newcommand{\Res}{\operatorname{Res}}

\renewcommand{\tilde}{\widetilde}
\renewcommand{\phi}{\varphi}

\newtheorem{thm}{Theorem}[section]
\newtheorem{lem}[thm]{Lemma}
\newtheorem{prop}[thm]{Proposition}

\theoremstyle{definition}
\newtheorem{ques}{Question}
\theoremstyle{remark}
\newtheorem*{rem}{Remark}

\numberwithin{equation}{section}

\title{Preperiodic portraits for unicritical polynomials}
\author{John R. Doyle}
\address{Department of Mathematics \\
University of Rochester \\
Rochester, NY 14627} 
\email{john.doyle@rochester.edu}

\begin{document}

\begin{abstract}
Let $K$ be an algebraically closed field of characteristic zero, and for $c \in K$ and an integer $d \ge 2$, define $f_{d,c}(z) := z^d + c \in K[z]$. We consider the following question: If we fix $x \in K$ and integers $M \ge 0$, $N \ge 1$, and $d \ge 2$, does there exist $c \in K$ such that, under iteration by $f_{d,c}$, the point $x$ enters into an $N$-cycle after precisely $M$ steps? We conclude that the answer is generally affirmative, and we explicitly give all counterexamples. When $d = 2$, this answers a question posed by Ghioca, Nguyen, and Tucker.
\end{abstract}

\keywords{Preperiodic points; generalized dynatomic polynomials; unicritical polynomials}

\subjclass[2010]{Primary 37F10; Secondary 37P05, 11R99}

\maketitle

\section{Introduction}\label{sec:intro}

Throughout this article, $K$ will be an algebraically closed field of characteristic zero. Let $\phi(z) \in K[z]$ be a polynomial of degree $d \ge 2$. For $n \ge 0$, let $\phi^n$ denote the $n$-fold composition of $\phi$; that is, $\phi^0$ is the identity map, and $\phi^n = \phi \circ \phi^{n-1}$ for each $n \ge 1$. A point $x \in K$ is \textbf{preperiodic} for $\phi$ if there exist integers $M \ge 0$ and $N \ge 1$ for which $\phi^{M+N}(x) = \phi^M(x)$. In this case, the minimal such $M$ is called the \textbf{preperiod} of $x$, and the minimal such $N$ is called the \textbf{eventual period} of $x$. If the preperiod $M$ is zero, then we say that $x$ is \textbf{periodic} of \textbf{period} $N$. If $M \ge 1$, then we call $x$ \textbf{strictly preperiodic}. If $M$ and $N$ are the preperiod and period, respectively, then we call the pair $(M,N)$ the \textbf{preperiodic portrait} (or simply \textbf{portrait}) of $x$ under $\phi$.

A natural question to ask is the following:

\begin{ques}\label{ques:baker}
Given a polynomial $\phi \in K[z]$ of degree at least 2, and given integers $M \ge 0$ and $N \ge 1$, does there exist an element $x \in K$ with portrait $(M,N)$ for $\phi$?
\end{ques}

This question was completely answered by Baker \cite{baker:1964} in the case that $M = 0$. (See also \cite[Thm. 1]{kisaka:1995} for the corresponding statement for rational functions.) Before stating Baker's result, though, we give an example of a polynomial that fails to admit points with a certain portrait.

Consider the polynomial $\phi(z) = z^2 - 3/4$. A quadratic polynomial $z^2 + c$ typically admits two points of period two, forming a single two-cycle; however, the polynomial $\phi$ admits no such points. Indeed, such a point $x$ would satisfy $\phi^2(x) = x$, but one can see that
	\[ \phi^2(z) - z = (z - 3/2)(z + 1/2)^3, \]
and each of the points $3/2$ and $-1/2$ is actually a \emph{fixed point} for $\phi$. This example stems from the fact that $c = -3/4$ is the root of the period-2 hyperbolic component of the Mandelbrot set. In other words, $c = -3/4$ is a \emph{bifurcation point} --- it is the parameter at which the two points forming a two-cycle for $z^2 + c$ merge into one point, effectively collapsing the two-cycle to a single fixed point. To illustrate this, we let $Y$ be the affine curve defined by $(X^2 + C)^2 + C - X = 0$. For a given $c \in K$, if $x$ is a fixed point or a point of period 2 for $z^2 + c$, then $(x,c) \in Y(K)$. This suggests a natural decomposition of $Y$ into two irreducible components --- a ``period 1 curve" $Y_1$, defined by $X^2 + C - X = 0$, and a ``period 2 curve" $Y_2$, defined by $\frac{(X^2 + C)^2 + C - X}{X^2 + C - X} = X^2 + X + C + 1 = 0$, illustrated in Figure~\ref{fig:curves}. The bifurcation at $c = -3/4$ may be seen by letting $c$ tend to $-3/4$ and observing that the two points on $Y_2$ lying over $c$ (corresponding to the two points of period 2 for $z^2 + c$) approach a single point on $Y_1$ (corresponding to a \emph{fixed point} for $z^2 - 3/4$).

\begin{figure}
\begin{overpic}[scale=.5]{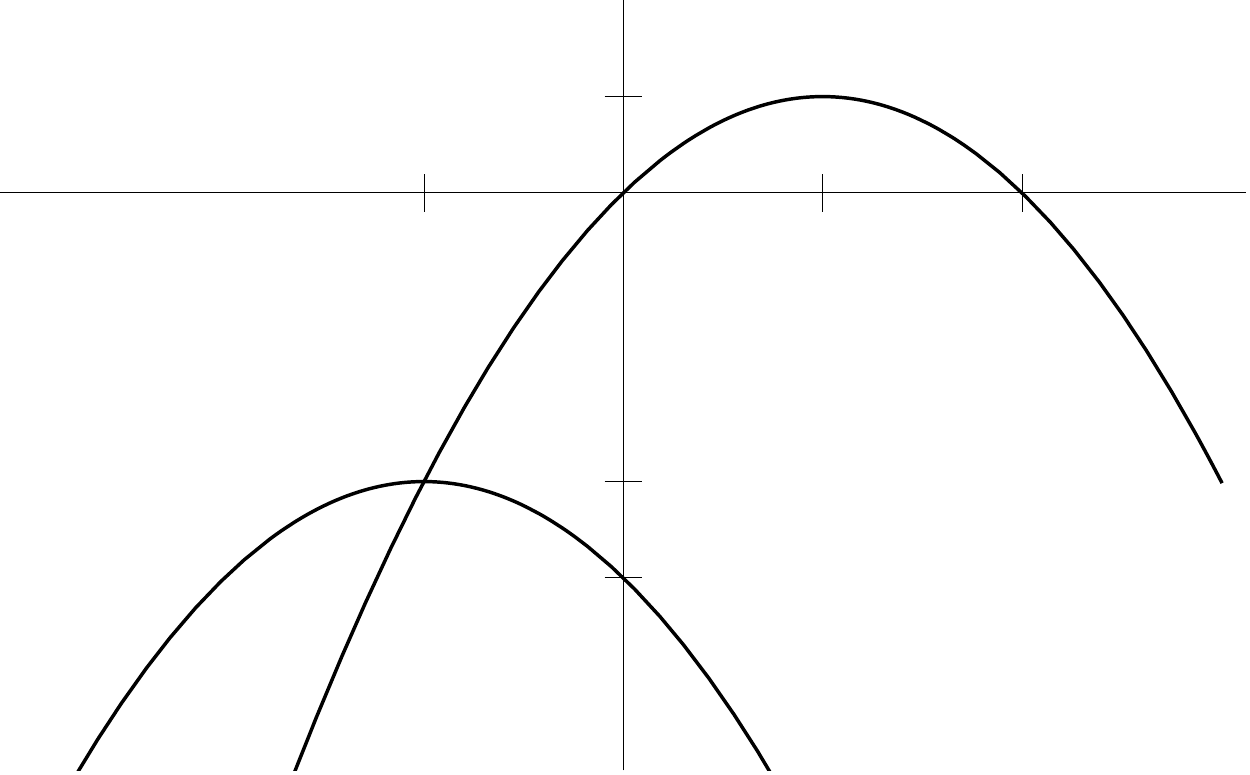}
\put(95,25){\Small $-1$}
\put(95,40){\Small $-3/4$}
\put(48,70){\Small $-1/2$}
\put(70,94){\Small $1/4$}
\put(113,70){\Small $1/2$}
\put(146,70){\Small $1$}
\put(127,102){$Y_1$}
\put(15,30){$Y_2$}
\put(183,80){$X$}
\put(89,116){$C$}
\end{overpic}
\caption{The affine curve $Y: (X^2 + C)^2 + C - X = 0$, with components $Y_1$ and $Y_2$.}
\label{fig:curves}
\end{figure}

Baker showed that the polynomial $\phi(z) = z^2 - 3/4$ is, in some sense, the \emph{only} polynomial of degree at least 2 that fails to admit points of a given period. To make this more precise, we first recall the following terminology and notation: two polynomials $\phi,\psi \in K[z]$ are \textbf{linearly conjugate} if there exists a linear polynomial $\ell(z) = az + b$ such that $\psi = \ell^{-1} \circ \phi \circ \ell$, and in this case we write $\phi \sim \psi$. Note that $\psi^n = \ell^{-1} \circ \phi^n \circ \ell$, so this relation is the appropriate notion of equivalence in dynamics. In particular, $x \in K$ has portrait $(M,N)$ for $\psi$ if and only if $\ell(x)$ has portrait $(M,N)$ for $\phi$.

\begin{thm}[{Baker \cite[Thm. 2]{baker:1964}}]\label{thm:baker}
Let $\phi(z) \in K[z]$ with degree $d \ge 2$, and let $N \ge 1$ be an integer. If $\phi(z) \not \sim z^2 - 3/4$, then $\phi$ admits a point of period $N$. If $\phi(z) \sim z^2 - 3/4$, then $\phi$ admits a point of period $N$ if and only if $N \ne 2$.
\end{thm}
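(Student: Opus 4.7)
The plan is to use the $N$-th dynatomic polynomial of $\phi$ and carry out a careful degree count.

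After a linear conjugacy I may assume $\phi(z) \in K[z]$ is monic of degree $d$, so that $\phi^N(z) - z$ is monic of degree $d^N$. I then introduce
\[ \Phi_N^*(z) := \prod_{n \mid N} \bigl(\phi^n(z) - z\bigr)^{\mu(N/n)}, \]
which is a polynomial in $K[z]$ of degree $\nu_N := \sum_{n \mid N} \mu(N/n)\,d^n$, and a direct M\"obius computation shows $\nu_N \ge 1$ for all $N \ge 1$ and $d \ge 2$. A root $x$ of $\Phi_N^*$ either has exact period $N$, yielding the desired periodic point, or has exact period $n \mid N$ with $n < N$ and multiplier $\lambda = (\phi^n)'(x)$ equal to a primitive $(N/n)$-th root of unity; I call such a root \emph{degenerate}. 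A local Taylor expansion of $\phi^n$ near $x$ shows that the multiplicity of a degenerate root in $\Phi_N^*$ equals its parabolic multiplicity under $\phi^N$, which is at least $N/n$, with equality in the generic case.

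The case $N = 1$ is immediate: $\Phi_1^* = \phi - z$ has degree $d \ge 2$ and every root is a fixed point of $\phi$. For $N \ge 2$, I assume for contradiction that $\phi$ has no point of exact period $N$, so every root of $\Phi_N^*$ is degenerate. The crucial case is $N = 2$, where $\nu_2 = d(d-1)$ and every degenerate root is a fixed point of $\phi$ with multiplier $-1$, contributing multiplicity at least $2$ to $\Phi_2^*$. When $d = 2$, this forces exactly one such fixed point; writing $\phi(z) = z^2 + c$ and applying Vieta to $\phi(z) - z = z^2 - z + c$, the fixed points $\alpha,\beta$ satisfy $\alpha + \beta = 1$, and the condition $2\alpha = -1$ yields $\alpha = -1/2$, $\beta = 3/2$, and $c = \alpha\beta = -3/4$, identifying the exceptional conjugacy class. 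A direct factorization of $\phi^2(z) - z$ for this $c$ confirms that no point of exact period $2$ exists, so the theorem holds in this case.

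When $d \ge 3$, the degenerate roots are common roots of $\phi(z) - z$ (degree $d$) and $\phi'(z) + 1$ (degree $d-1$), hence number at most $d-1$, so their base multiplicity contribution $2(d-1)$ falls short of $\nu_2 = d(d-1)$; one must then invoke super-parabolic multiplicities to account for the gap $(d-1)(d-2)$, but a finer analysis using the compatibility of the higher-order vanishing conditions on $\phi^2 - z$ with the Vieta relations on the fixed points of $\phi$ rules out any such configuration. For $N \ge 3$, an analogous argument stratified by the proper divisors $n \mid N$ and by the cyclotomic order of $(\phi^n)'(x)$ eliminates all further exceptions. The main obstacle throughout is bounding how large the parabolic multiplicity of a single degenerate root can be, since super-parabolic points can in principle contribute more than the generic value $N/n$ and must be ruled out using the global algebraic structure of $\phi$; this is the technical heart of the argument.
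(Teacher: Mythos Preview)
The paper does not prove Theorem~\ref{thm:baker}; it is quoted as a result of Baker with a citation to \cite[Thm.~2]{baker:1964} and then used as a black box. So there is no ``paper's own proof'' to compare against.

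That said, your sketch follows the classical dynatomic-polynomial strategy, and the $N=1$ and $(N,d)=(2,2)$ cases are handled correctly. The genuine gap is everything after that. For $N=2$, $d\ge 3$ you observe that the base contribution $2(d-1)$ from the degenerate fixed points falls short of $\nu_2=d(d-1)$, and you then assert that ``a finer analysis using the compatibility of the higher-order vanishing conditions \dots\ rules out any such configuration.'' But you have not supplied that analysis, and it is not obvious: a parabolic fixed point of $\phi^2$ can have arbitrarily high multiplicity in $\phi^2(z)-z$, so the shortfall $(d-1)(d-2)$ could in principle be absorbed by a few super-parabolic points. Eliminating this requires an actual argument (for instance, a holomorphic index / residue fixed-point relation, or the rational fixed-point formula $\sum 1/(1-\lambda_i)$, which globally constrains the multipliers and parabolic multiplicities), not just an appeal to Vieta. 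The same objection applies, more severely, to your treatment of $N\ge 3$, which is reduced to the phrase ``an analogous argument \dots\ eliminates all further exceptions.'' That is exactly the content of Baker's theorem, and you have not indicated what replaces the index relation or how the stratification over divisors $n\mid N$ and cyclotomic orders actually closes. As written, the proposal proves only the easy direction and the single exceptional case; the substance of the theorem is asserted rather than demonstrated.
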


Though Baker was only considering \emph{periodic} points, and therefore only answered Question~\ref{ques:baker} for $M = 0$, it is not difficult to extend his result to the case $M > 0$.

\begin{prop}\label{prop:baker}
Let $\phi(z) \in K[z]$ with degree $d \ge 2$, and let $M \ge 0$ and $N \ge 1$ be integers. If $\phi(z) \not \sim z^2 - 3/4$, then $\phi$ admits a point of portrait $(M,N)$. If $\phi(z) \sim z^2 - 3/4$, then $\phi$ admits a point of portrait $(M,N)$ if and only if $N \ne 2$.
\end{prop}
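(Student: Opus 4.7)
The $M = 0$ case is Theorem~\ref{thm:baker} itself, so my plan focuses on $M \ge 1$. The ``only if'' half when $\phi \sim z^2 - 3/4$ and $N = 2$ is immediate: a point of portrait $(M, 2)$ forces a $2$-cycle in its forward orbit, which Theorem~\ref{thm:baker} forbids. In every other case Theorem~\ref{thm:baker} supplies an $N$-cycle $y_0, \dots, y_{N-1}$ for $\phi$, and I will produce a point of portrait $(M, N)$ for each $M \ge 1$ by induction on $M$.

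The inductive step $M \ge 2$ I expect to be routine bookkeeping: given $x'$ of portrait $(M-1, N)$, which lies off the cycle since $M - 1 \ge 1$, I choose any preimage $x$ of $x'$ under $\phi$ (one exists because $\deg \phi \ge 2$). Then $x$ must lie off the cycle (otherwise $\phi(x) = x'$ would be on it), while $\phi^{M-1}(x) = \phi^{M-2}(x')$ still has preperiod $1$ and so lies off the cycle, and $\phi^M(x)$ lies on it. Hence $x$ has portrait exactly $(M, N)$.

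The real work is the base case $M = 1$: I need a preimage of some $y_i$ that is itself off the cycle, and I plan to argue by contradiction. If no such preimage exists, then for each $i$ the only root of $\phi(z) - y_i$ is $y_{i-1}$ (indices mod $N$), forcing $\phi(z) - y_i = a_i(z - y_{i-1})^d$. So each $y_{i-1}$ is a totally ramified critical point of $\phi$, contributing $d - 1$ to the ramification divisor of $\phi : \bbP^1 \to \bbP^1$. Riemann--Hurwitz gives total ramification degree $2d - 2$, of which $d - 1$ is consumed at infinity (always totally ramified for a polynomial), leaving only $d - 1$ on the affine line. Hence $N(d - 1) \le d - 1$, forcing $N = 1$, and the equation $\phi(z) - y_0 = a(z - y_0)^d$ then identifies $\phi \sim z^d$ up to linear conjugacy. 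I will dispose of this final edge case by direct inspection: for $\phi(z) = z^d$ any nontrivial $d$-th root of unity is a preimage of the fixed point $1$ that does not equal $1$, yielding a point of portrait $(1, 1)$.

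I anticipate the Riemann--Hurwitz count in the $M = 1$ base case to be the main obstacle; after that both the induction and the $\phi \sim z^d$ edge-case check are short.
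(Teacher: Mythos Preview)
Your proposal is correct, and the overall architecture matches the paper's: the ``only if'' direction for $\phi \sim z^2 - 3/4$, $N = 2$ is immediate, $M = 0$ is Theorem~\ref{thm:baker}, and for $M \ge 2$ both you and the paper induct by pulling back a point of portrait $(M-1,N)$ along $\phi$ (the paper simply observes that any preimage of a strictly preperiodic point of portrait $(M-1,N)$ has portrait $(M,N)$; your bookkeeping is the explicit version of this). The one genuine difference is the base case $M = 1$. The paper does not argue this directly but instead invokes \cite[Lem.~4.24]{ghioca/nguyen/tucker:2015}, which asserts that any polynomial admitting an $N$-periodic point also admits a point of portrait $(1,N)$. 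Your Riemann--Hurwitz count gives a self-contained proof of exactly that lemma: if every preimage of every cycle point lies on the cycle, each cycle point is totally ramified, so the affine ramification is at least $N(d-1)$, forcing $N = 1$ and $\phi \sim z^d$, which you then handle by inspection. Your route is more elementary in that it avoids an external citation; the paper's is shorter on the page because the work is outsourced.
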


\begin{proof}
The claim that if $\phi(z) \sim z^2 - 3/4$, then $\phi$ does not admit points of portrait $(M,2)$ follows immediately from Theorem~\ref{thm:baker}. We now suppose either that $\phi \not \sim z^2 - 3/4$, or that $\phi \sim z^2 - 3/4$ and $N \ne 2$, and we show that there exists a point of portrait $(M,N)$ for $\phi$.

The $M = 0$ case is precisely Theorem~\ref{thm:baker}, and the $M = 1$ case follows from the fact (see \cite[Lem. 4.24]{ghioca/nguyen/tucker:2015}) that if a polynomial admits a point of period $N$, then it also admits a point of portrait $(1,N)$. Now suppose $M \ge 2$. By induction, there exists $y \in K$ with portrait $(M-1,N)$ for $\phi$. Since $y$ is itself strictly preperiodic, it is easy to see that any preimage $x$ of $y$ has portrait $(M,N)$.
\end{proof}

We now consider the dual question to Question~\ref{ques:baker}: given an element $x \in K$, and given integers $M \ge 0$ and $N \ge 1$, does there exist a polynomial $\phi(z) \in K[z]$ of degree at least 2 for which $x$ has portrait $(M,N)$?

It is not difficult to see that the answer to this question is ``yes." Let $\psi(z)$ be any polynomial not linearly conjugate to $z^2 - 3/4$, so that $\psi$ is guaranteed to admit a point $\zeta$ of portrait $(M,N)$ by Proposition~\ref{prop:baker}. If we let $\ell(z) := z + (\zeta - x)$, so that $\ell(x) = \zeta$, then $x$ has portrait $(M,N)$ for $\phi := \ell^{-1} \circ \psi \circ \ell$.

This suggests that an appropriate dual question should only allow us to consider one polynomial (or, at worst, finitely many) from each linear conjugacy class. Also, since we are imposing a single condition on the polynomial $\phi$ --- namely, that the given point $x$ have portrait $(M,N)$ under $\phi$ --- we ought to consider a one-parameter family of maps for each degree $d \ge 2$.

This naturally leads us to consider the class of \emph{unicritical polynomials}; i.e., polynomials with a single (finite) critical point. Every unicritical polynomial is linearly conjugate to a polynomial of the form
	\[ f_{d,c}(z) := z^d + c, \]
so we consider only this one-parameter family of polynomials for each $d \ge 2$. Note that $f_{d,c} \sim f_{d,c'}$ if and only if $c/c'$ is a $(d-1)$th root of unity, so this family contains only finitely many polynomials from a given conjugacy class.

We now ask the following more restrictive question:

\begin{ques}\label{ques:main}
Given $(x,M,N,d) \in K \times \bbZ^3$ with $M \ge 0$, $N \ge 1$, and $d \ge 2$, does there exist $c \in K$ for which $x$ has portrait $(M,N)$ under $f_{d,c}$?
\end{ques}

If there does exist such an element $c \in K$, we will say that $x$ \textbf{realizes portrait $(M,N)$ in degree $d$}. Before stating our main result, we give some examples of tuples $(x,M,N,d)$ for which the answer to Question~\ref{ques:main} is negative.

First, observe that $0$ cannot realize portrait $(1,N)$ in degree $d$ for any $N \ge 1$ and $d \ge 2$. Indeed, suppose $f_{d,c}(0) = c$ is periodic of period $N$, which is equivalent to saying that $0$ either has portrait $(1,N)$ or is periodic of period $N$ itself. Since a periodic point must have precisely one periodic preimage, and since the only preimage of $c$ under $f_{d,c}$ is $0$, we must have that $0$ is periodic of period $N$. This particular counterexample is special to unicritical polynomials, since the failure of $0$ to realize portrait $(1,N)$ is due to the fact that $f_{d,c}$ is totally ramified at $0$ for all $c \in K$, as illustrated in Figure~\ref{fig:totally_ramified}.

\begin{figure}
\begin{overpic}[scale=.5]{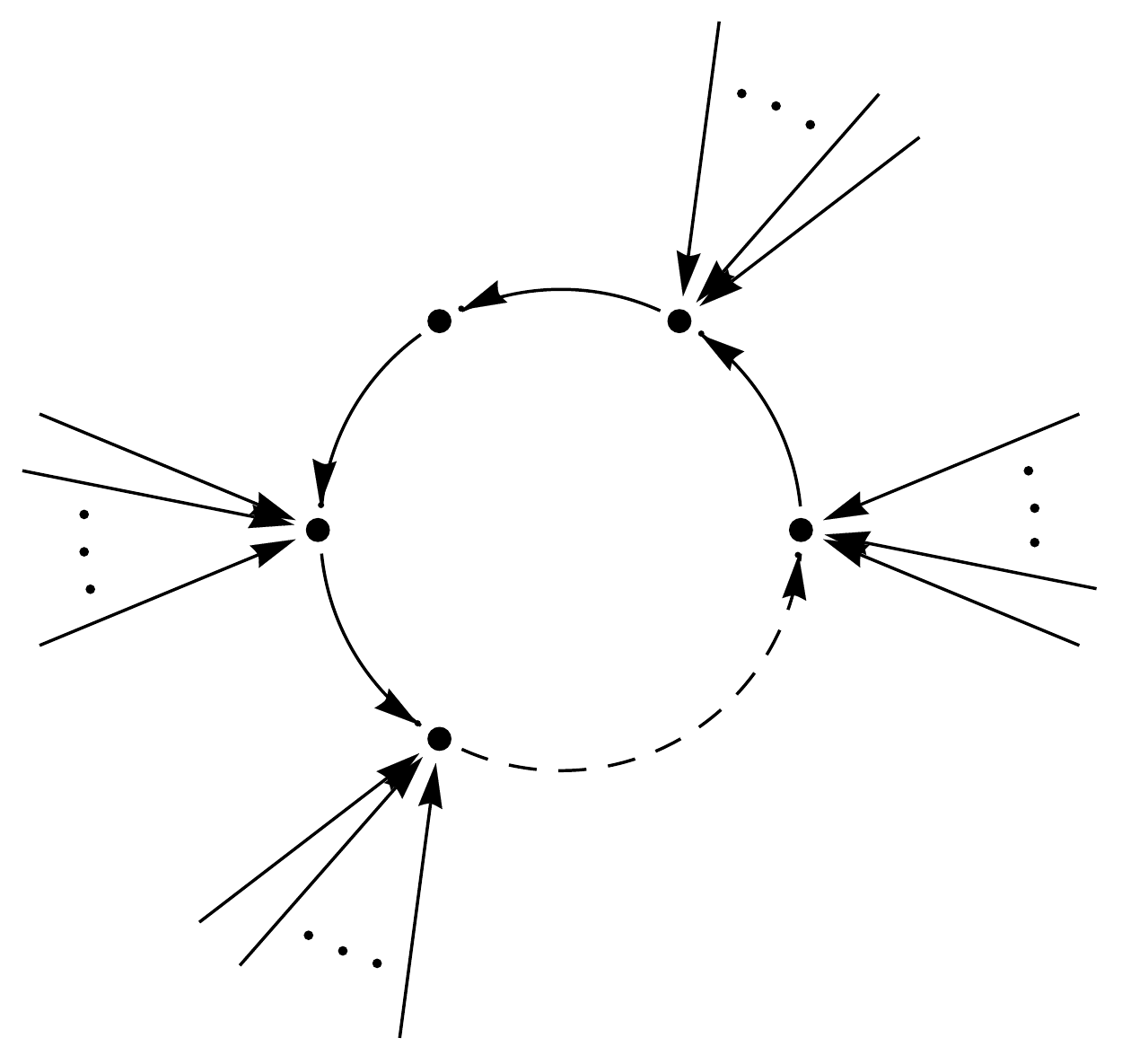}
\put(122,117){\Small $0 = f_{d,c}^N(0)$}
\put(25,123){\Small $f_{d,c}(0) = c$}
\put(56,91){\Small $f_{d,c}^2(0)$}
\put(32,50){\Small $f_{d,c}^3(0)$}
\put(91,76){\Small $f_{d,c}^{N-1}(0)$}
\end{overpic}
\caption{If $c$ is periodic for $f_{d,c}$, then $c$ has no strictly preperiodic preimage}
\label{fig:totally_ramified}
\end{figure}

Next, consider $x = -1/2$. We show that $x$ cannot realize portrait $(0,2)$ in degree 2; that is, there is no $c \in K$ such that $-1/2$ has period 2 for $f_{2,c}$. If there were such a parameter $c$, then we would have
	\[ 0 = f_{2,c}^2(-1/2) - (-1/2) = (c + 3/4)^2. \]
However, if we take $c = -3/4$, then $-1/2$ is a fixed point for $f_{2,c}$. There is therefore no $c \in K$ such that $-1/2$ has period 2 for $f_{2,c}$. This is illustrated in Figure~\ref{fig:curves}, which shows that the only point on the ``period 2 curve" $Y_2$ lying over $x = -1/2$ also lies on the ``period 1 curve" $Y_1$.

An argument similar to the one in the preceding paragraph shows that $x = 1/2$ cannot realize portrait $(1,2)$ in degree 2 and that $x = \pm 1$ cannot realize portrait $(2,2)$ in degree 2. Our main result states that these are the only instances where the answer to Question~\ref{ques:main} is negative.

\begin{thm}\label{thm:main}
Let $K$ be an algebraically closed field of characteristic zero, and let $(x,M,N,d) \in K \times \bbZ^3$ with $M \ge 0$, $N \ge 1$, and $d \ge 2$. Then there exists $c \in K$ for which $x$ has portrait $(M,N)$ under $f_{d,c}$ if and only if
	\[
		(x,M) \ne (0,1) \mbox{ and } (x,M,N,d) \not \in \left\{ \left(-\frac{1}{2},0,2,2 \right), \left(\frac{1}{2}, 1, 2, 2 \right), \left( \pm 1, 2, 2, 2 \right) \right\}.
	\]
\end{thm}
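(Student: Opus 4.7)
The ``only if'' direction is a direct verification along the lines of the discussion preceding the theorem: for each exceptional tuple one writes down the relevant generalized dynatomic polynomial of $f_{d,c}$ at the point $x$ and observes that every root $c$ produces a strictly smaller portrait. The substance of the theorem is the ``if'' direction, for which the natural tool is the generalized dynatomic polynomial $\Phi_{M,N}(z,c) \in \bbZ[z,c]$ of the unicritical family, defined by $\Phi_{0,N} = \Phi_N$ and, for $M \ge 1$, by
\[
    \Phi_{M,N}(z,c) \;=\; \frac{\Phi_N(f_{d,c}^M(z),c)}{\Phi_N(f_{d,c}^{M-1}(z),c)}.
\]
Its zeros are generically exactly the pairs $(z,c)$ for which $z$ has portrait $(M,N)$ under $f_{d,c}$.

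My plan is to recast Question~\ref{ques:main} in terms of the fibers of the projection
\[
    \pi : X_{M,N}^d \longrightarrow \bbA^1, \qquad (z,c) \longmapsto z,
\]
where $X_{M,N}^d \subset \bbA^2$ is the affine curve $\{\Phi_{M,N}(z,c)=0\}$: the element $x$ realizes portrait $(M,N)$ in degree $d$ exactly when some point of $\pi^{-1}(x)$ genuinely corresponds to a portrait-$(M,N)$ configuration, rather than to a degeneration of it. A direct computation using the recursive definition above, together with the fact that $f_{d,c}^M(z)$ has leading term $c^{d^{M-1}}$ viewed as a polynomial in $c$, shows that $\Phi_{M,N}(z,c)$ is monic in $c$ of positive degree. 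Consequently $\pi^{-1}(x)$ is nonempty for every $x \in K$, and the content of the theorem is that, off the exceptional list, at least one $c \in \pi^{-1}(x)$ lies in the ``good locus'' of $X_{M,N}^d$.

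To identify the ``bad locus'' I would enumerate the ways a nominal root $c^{*}$ of $\Phi_{M,N}(x,c)$ can fail to witness true portrait $(M,N)$: for each ``smaller'' portrait $(M',N')$ with $M' \le M$, $N' \mid N$ and $(M',N') \ne (M,N)$, the degeneration occurs exactly when $(x,c^{*}) \in X_{M,N}^d \cap X_{M',N'}^d$, a zero-dimensional intersection cut out by an explicit resultant in $K[z]$. Projecting the union of these finite intersections to the $z$-line produces a concrete finite list of candidate ``bad'' inputs $x$, and the task is to show that only the four tuples in the theorem (together with the totally-ramified locus $x = 0$) have the stronger property that \emph{every} $c \in \pi^{-1}(x)$ is bad.

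The main obstacle is controlling the ``period-collapse'' degeneration, in which the $N$-cycle shrinks to a cycle of proper-divisor length: several such collapses can in principle coincide at the same $x$, and one must ensure that the total number of bad roots is strictly less than $\deg_c \Phi_{M,N}(x,c)$. For this I would invoke the irreducibility of the (generalized) dynatomic curves for the unicritical family, which restricts the local geometry at a bifurcation to a single multiplier-root-of-unity condition on the colliding cycle. With this in hand the statement reduces to an explicit check for small $(M,N,d)$ --- from which the four listed exceptions surface --- together with an induction on $M$ based on the observation that a generic preimage of a portrait-$(M-1,N)$ point has portrait $(M,N)$, the only obstruction at a prescribed $x$ being the totally ramified point $x = 0$ of $f_{d,c}$, which accounts for the remaining exception $(x,M) = (0,1)$.
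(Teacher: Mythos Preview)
Your outline tracks the paper's setup --- generalized dynatomic curves and the fiber $\pi^{-1}(x)$ --- but there are two substantive gaps.

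First, the induction on $M$ does not work for this problem. The observation that a preimage of a portrait-$(M-1,N)$ point has portrait $(M,N)$ is a statement about a \emph{fixed} map $f_{d,c}$: given $y$ with portrait $(M-1,N)$ under $f_{d,c}$, any strictly preperiodic preimage of $y$ has portrait $(M,N)$. But here $x$ is prescribed and $c$ is the variable. The inductive hypothesis would hand you, for each fixed $y \in K$, some $c_0$ with $y$ of portrait $(M-1,N)$ under $f_{d,c_0}$; to force $x$ to be a preimage of $y$ under that same $f_{d,c_0}$ requires $c_0 = y - x^d$, which you have no freedom to arrange. Equivalently, ``$x$ has portrait $(M,N)$ for $f_{d,c}$'' unwinds to ``$x^d + c$ has portrait $(M-1,N)$ for $f_{d,c}$'', and $x^d + c$ is not a fixed element of $K$ to which the hypothesis applies. (This induction \emph{does} work for Question~\ref{ques:baker}, and is exactly how Proposition~\ref{prop:baker} is proved --- but that is the dual problem.) The paper does not induct on $M$; all $M$ are handled uniformly.

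Second, your control of the bad locus is missing the structural step that makes the counting go through. The paper proves a dichotomy (Lemma~\ref{lem:options}): according as $x \notin \Zbar$ or $x \in \Zbar$, \emph{every} root $c$ of $\Phi_{M,N}(x,C)$ gives $x$ preperiod exactly $M$, or every such $c$ gives eventual period exactly $N$. This collapses the possible degenerate portraits to a single axis, rather than all $(M',N')$ with $M'\le M$ and $N'\mid N$. In the period-collapse case ($x \notin \Zbar$) the paper then shows each bad $c$ is a \emph{simple} root of $\Phi_{M,N}(x,C)$ (via smoothness of the components of $Y_1(M,N)$, Lemma~\ref{lem:factors}), after which the degree comparison of Lemma~\ref{lem:d(N)} forces $(N,d)=(2,2)$; without simplicity your count of bad roots against $\deg_C \Phi_{M,N}$ proves nothing. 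In the preperiod-collapse case ($x \in \Zbar$), irreducibility is not the mechanism at all: the argument (Lemmas~\ref{lem:mult_root} and \ref{lem:simple_root}) is arithmetic, showing that a certain derivative is a nonzero algebraic integer by computing $\frakp$-adic valuations at primes $\frakp$ above the prime factors of $d$, with $d=2$ handled separately via reduction mod $2$ and the period-comparison result of Lemma~\ref{lem:residue_period}. None of this is visible from the curve geometry alone, and it is what isolates the exceptions $(\pm 1,2,2,2)$.
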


Ghioca, Nguyen, and Tucker \cite{ghioca/nguyen/tucker:2015} consider the more general problem of ``simultaneous multi-portraits" for polynomial maps: Given a $(d-1)$-tuple of points $(x_1,\ldots,x_{d-1}) \in K^{d-1}$, and given $(d-1)$ portraits $(M_1,N_1),\ldots,(M_{d-1},N_{d-1})$, does there exist a degree $d$ polynomial in standard form
	\[
		\phi(z) := z^d + c_{d-2}z^{d-2} + \cdots + c_1z + c_0
	\]
such that, for each $i \in \{1,\ldots,d-1\}$, $x_i$ has portrait $(M_i,N_i)$ for $\phi$? The $d = 2$ case of this question is precisely the $d = 2$ case of Question~\ref{ques:main} in the present article. In an earlier version of their article, the authors of \cite{ghioca/nguyen/tucker:2015} provided $(x,M) = (0,1)$ and $(x,M,N) = (-1/2,0,2)$ as examples of the failure of a given point to realize a given portrait in degree $2$, and they asked whether there were any other such failures. Theorem~\ref{thm:main} completely answers this question.

The main tool used in \cite{ghioca/nguyen/tucker:2015} to approach the multi-portrait problem is a result for \emph{single} portraits, which they are then able to extend to multi-portraits by an iterative process. Their main result (\cite[Thm. 1.3]{ghioca/nguyen/tucker:2015}), when applied to the case of unicritical polynomials, says the following: For a fixed $d \ge 2$, if $(x,M) \ne (0,1)$ and if $(M,N)$ avoids an effectively computable finite subset of $\bbZ_{\ge 0} \times \bbN$, then every $x \in K$ realizes portrait $(M,N)$ in degree $d$. One might therefore be able to use the techniques of \cite{ghioca/nguyen/tucker:2015}, involving Diophantine approximation, to prove Theorem~\ref{thm:main} for fixed values of $d$. In this article, however, we take an entirely different approach by using properties of certain algebraic curves, which we call \emph{dynamical modular curves}, that are defined in terms of the dynamics of the maps $f_{d,c}$.

We now briefly outline the rest of this article. In \textsection \ref{sec:dyn_curves}, we record a number of known properties of dynatomic polynomials and the corresponding dynamical modular curves. Section \ref{sec:main_thm} contains the proof of Theorem~\ref{thm:main}, which is generally based on the following principle: For each portrait $(M,N)$, there is a curve $Y_1(M,N)$ whose points parametrize maps $f_{d,c}$ together with points of portrait $(M,N)$. If $x$ does not achieve portrait $(M,N)$ in degree $d$, then each point on $Y_1(M,N)$ lying above $x$ must also lie on $Y_1(m,n)$ for some integers $m \le M$ and $n \le N$ with $m < M$ or $n < N$. Once the degree of $Y_1(M,N)$ becomes large enough, however, this proves to be impossible (excluding the special case $(x,M) = (0,1)$, discussed above). In the final section, we discuss some related open problems.

\subsection*{Acknowledgments}

I would like to thank Tom Tucker for introducing me to this problem, as well as for several useful comments and suggestions on an earlier draft of this paper.

\section{Dynatomic polynomials and dynamical modular curves}\label{sec:dyn_curves}

\subsection{Dynatomic polynomials}\label{subsec:dyn_poly}

If $c$ is an element of $K$ and $x \in K$ is a point of period $N$ for $f_{d,c}$, then $(x,c)$ is a solution to the equation $f_{d,C}^N(X) - X = 0$. However, $(x,c)$ is also a solution to this equation whenever $x$ is a point of period dividing $N$ for $f_{d,c}$. We therefore define the \textbf{$N$th dynatomic polynomial} to be the polynomial
	\[
		\Phi_N(X,C) := \prod_{n \mid N} \left(f_{d,C}^n(X) - X\right)^{\mu(N/n)} \in \bbZ[X,C]
	\]
(where $\mu$ is the M\"{o}bius function), which has the property that
	\begin{equation}\label{eq:factorization}
		f_{d,C}^N(X) - X = \prod_{n \mid N} \Phi_n(X,C)
	\end{equation}
for all $N \in \bbN$ --- see \cite[p. 571]{morton/vivaldi:1995}. For simplicity of notation, we omit the dependence on $d$. The fact that $\Phi_N(X,C)$ is a polynomial is shown in \cite[Thm. 4.5]{silverman:2007}, and it is not difficult to see that $\Phi_N$ is monic in both $X$ and $C$.

If $(x,c) \in K^2$ is such that $\Phi_N(x,c) = 0$, we say that $x$ has \textbf{formal period} $N$ for $f_{d,c}$. Every point of exact period $N$ has formal period $N$, but in some cases a point of formal period $N$ may have exact period $n$ a proper divisor of $N$. If $x$ is such a point, then $x$ appears in the cycle $\{x,f_{d,c}(x),\ldots,f_{d,c}^{N-1}(x)\}$ with multiplicity $N/n$, and this multiplicity is captured by $\Phi_N$. In particular, $x$ is a multiple root of the polynomial $\Phi_N(X,c) \in K[X]$, so we have the following:

\begin{lem}\label{lem:smaller_period}
Let $c \in K$. Suppose that $x \in K$ has formal period $N$ and exact period $n < N$ for $f_{d,c}$. Then
	\[
		\left. \frac{\partial \Phi_N(X,C)}{\partial X} \right|_{(x,c)} = 0.
	\]
Moreover, $x,c \in \QQbar \setminus \Zbar$.
\end{lem}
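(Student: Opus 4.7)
The plan is to prove the two assertions in sequence: first, vanishing of $\partial_X \Phi_N$ at $(x,c)$; then, $x,c \in \QQbar \setminus \Zbar$. Set $r := N/n$ and $\lambda := (f_{d,c}^n)'(x)$.

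For the first assertion, the strategy is to show that $x$ is a root of $\Phi_N(X,c) \in K[X]$ of multiplicity at least $2$. By the factorization \eqref{eq:factorization} together with the fact that $X - x$ divides both $\Phi_n(X,c)$ (since $x$ has exact, hence formal, period $n$) and $\Phi_N(X,c)$ (by hypothesis), $(X-x)^2$ divides $f_{d,c}^N(X) - X$. Differentiating and evaluating at $X = x$ gives $(f_{d,c}^N)'(x) = 1$, and the chain rule yields $\lambda^r = 1$. To promote this into a statement about $\Phi_N(X,c)$ itself, I would compute the multiplicity $e_N$ of $X - x$ in $\Phi_N(X,c)$ via the M\"obius definition, $e_N = \sum_{k \mid r} \mu(r/k) w(nk)$, where $w(m)$ is the order of vanishing of $f_{d,c}^m(X) - X$ at $X = x$. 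A Taylor expansion at $x$ shows $w(nk) = 1$ when $\lambda^k \ne 1$ and $w(nk) \ge 2$ when $\lambda^k = 1$, and a further analysis (using that $K$ has characteristic zero) rules out the ``parabolic'' case $\lambda = 1$ (there, $w$ is constant on the multiples of $n$, forcing the alternating sum to vanish) and shows $e_N = r \ge 2$, which is the multiplicity stated in the paragraph preceding the lemma.

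The claim $x, c \in \QQbar$ would follow once I show that $\Phi_N(X,C)$ and $\partial_X \Phi_N(X,C)$ share no common factor in $K[X,C]$: then their common zero locus in $\bbA_K^2$ is zero-dimensional, and since both polynomials have $\bbZ$-coefficients, this locus consists of $\QQbar$-points. The non-existence of a common factor is equivalent to the separability of $\Phi_N$ as a polynomial in $X$ over $K(C)$; this holds because, for generic $c \in K$, $\Phi_N(X,c)$ has $\deg_X \Phi_N$ distinct roots (the exact period-$N$ points of $f_{d,c}$, a union of $N$-cycles each contributing $N$ distinct points), so the $X$-discriminant of $\Phi_N(X,C)$ is a nonzero element of $K[C]$.

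For the non-integrality, the key ingredient is that $\lambda$ is a nontrivial root of unity, as established in the first step. Using $f_{d,c}'(z) = dz^{d-1}$,
\[
\lambda = \prod_{i=0}^{n-1} d \cdot (f_{d,c}^i(x))^{d-1} = d^n \biggl(\prod_{i=0}^{n-1} f_{d,c}^i(x)\biggr)^{d-1}.
\]
If $c \in \Zbar$, then since $\Phi_N(X,c)$ is monic in $X$ with $\Zbar$-coefficients, $x \in \Zbar$, whence each iterate $f_{d,c}^i(x)$ lies in $\Zbar$ and $\lambda \in d^n \Zbar$. Choosing any rational prime $p \mid d$ and any place $\pi$ of $\QQbar$ above $p$, we get $v_\pi(\lambda) \ge n\,v_\pi(d) > 0$, contradicting $v_\pi(\lambda) = 0$ (as $\lambda$ is a root of unity). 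Hence $c \notin \Zbar$. Similarly, if $x \in \Zbar$, then monicity of $\Phi_n(X,C)$ in $C$ combined with $\Phi_n(x,c) = 0$ forces $c \in \Zbar$, a contradiction. The main obstacle is the multiplicity calculation of the first step, particularly the careful treatment of the parabolic case $\lambda = 1$.
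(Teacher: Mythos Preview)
Your argument is correct and tracks the paper's closely on the multiplicity and non-integrality claims, but takes a different route for algebraicity. The paper observes that $c$ is a root of $\Res_X(\Phi_N,\Phi_n) \in \bbZ[C]$ (since $\Phi_N(x,c) = \Phi_n(x,c) = 0$), using the exact-period-$n$ hypothesis directly rather than the already-established vanishing of $\partial_X\Phi_N$; your separability/discriminant argument is equally valid and has the small advantage of not invoking $\Phi_n$ at all. For non-integrality the paper reduces $(f_{d,c}^N)'(x) = 1$ modulo $d\Zbar$, which is your argument with $\lambda^r$ in place of $\lambda$. One minor point: the assertion ``$e_N = r$'' overshoots what your M\"obius computation actually yields --- once you rule out $\lambda = 1$ and (implicitly, via $e_N \ge 1$) force $\lambda$ to have order exactly $r$, the sum gives $e_N = w(N) - 1$, which in general lies in $r\bbZ_{>0}$ rather than equalling $r$ --- but $e_N \ge 2$ is all that is needed, and the paper's own justification for the multiplicity (``$x$ appears in the cycle with multiplicity $N/n$'') is no more precise than yours.
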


For an illustration of this phenomenon, see Figure~\ref{fig:curves}, which shows the curves $Y_1 : \Phi_1(X,C) = 0$ and $Y_2 : \Phi_2(X,C) = 0$ in the degree $d = 2$ case. One can see in the figure that the $X$-partial of $\Phi_2(X,C)$ vanishes at the point $(x,c) = (-1/2,-3/4)$ on $Y_2$, where $x$ actually has period 1 for $f_{2,c}$.

We also briefly explain the statement that $x,c \in \QQbar \setminus \Zbar$. (See also \cite[p. 582]{morton/vivaldi:1995}.) Since $x$ has formal period $N$ and exact period $n < N$ for $f_{d,c}$, $c$ is a root of the resultant
	\[
		\Res_X\left(\Phi_N(X,C),\Phi_n(X,C) \right) \in \bbZ[C].
	\]
Thus $c \in \QQbar$, hence also $x \in \QQbar$ since $\Phi_N(x,c) = 0$. On the other hand, a multiple root $x$ of the polynomial $\Phi_N(X,c) \in K[X]$ must also be a multiple root of $f_{d,c}^N(X) - X$, so
	\begin{equation}\label{eq:nonint}
		0 = (f_{d,c}^N)'(x) - 1 = d^N \prod_{k=0}^{N-1} f_{d,c}^k(x) - 1.
	\end{equation}
Since $c \in \Zbar$ if and only if $x \in \Zbar$ ($\Phi_N$ is monic in both variables), and since the rightmost expression of \eqref{eq:nonint} cannot vanish if $x,c \in \Zbar$ (the expression is congruent to $-1$ modulo $d\Zbar$), we must have $x,c \not \in \Zbar$.

Finally, for an application in \textsection \ref{sec:smaller_period}, we compare the degree of $\Phi_N$ to the degrees of the polynomials $\Phi_n$ with $n$ properly dividing $N$. Let
	\[
		D(N) := \sum_{n \mid N} \mu(N/n)\cdot d^n
	\]
denote the degree of $\Phi_N$ in $X$. Note that $\Phi_N$ has degree $D(N)/d$ in $C$.

\begin{lem}\label{lem:d(N)}
Let $N \in \bbN$ be a positive integer. Then
	\[ D(N) > \sum_{\substack{n \mid N\\n < N}} D(n), \]
\emph{unless} $N = d = 2$, in which case equality holds.
\end{lem}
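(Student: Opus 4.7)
The plan is to reformulate the inequality into a cleaner form and then carry out a short case analysis on the prime factorization of $N$. By taking degrees in $X$ in the factorization $f_{d,C}^N(X) - X = \prod_{n \mid N} \Phi_n(X,C)$ recorded in \eqref{eq:factorization}, I get the key identity $\sum_{n \mid N} D(n) = d^N$. Subtracting $D(N)$ from both sides, the desired inequality $D(N) > \sum_{n \mid N, n < N} D(n)$ becomes equivalent to $D(N) > d^N/2$, with equality when $N = d = 2$ (where $D(2) = 2$ and $d^N/2 = 2$).

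For $N = p^k$ a prime power, the M\"obius formula collapses to the clean identity $D(p^k) = d^{p^k} - d^{p^{k-1}}$, so $D(N) > d^N/2$ reduces to $d^{p^{k-1}(p-1)} > 2$. The exponent $p^{k-1}(p-1)$ equals $1$ only when $(p,k) = (2,1)$; in every other situation it is at least $2$, giving $d^{p^{k-1}(p-1)} \geq 4 > 2$. Hence strict inequality holds for all prime powers except the exceptional case $N = p^k = 2$ with $d = 2$, which yields equality.

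For $N$ with at least two distinct prime factors, I would use the crude bound
$$D(N) = d^N + \sum_{n \mid N,\, n < N} \mu(N/n) d^n \geq d^N - \sum_{n \mid N,\, n < N} d^n,$$
obtained from $|\mu| \leq 1$. Since every proper divisor of $N$ is at most $N/p_1$ (with $p_1$ the smallest prime dividing $N$), we have $\sum_{n \mid N,\, n < N} d^n \leq \sum_{n=1}^{N/p_1} d^n \leq 2 d^{N/p_1}$ for $d \geq 2$. It then suffices to verify $d^{N(p_1-1)/p_1} > 4$, which holds easily: the hypothesis $\omega(N) \geq 2$ forces $N \geq p_1 p_2 \geq 6$, so the exponent $N(p_1 - 1)/p_1$ is at least $3$ and $d^{N(p_1-1)/p_1} \geq 8$. (The case $N = 1$ is trivial, since the right-hand side is an empty sum while $D(1) = d > 0$.)

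The main obstacle is really just bookkeeping: one must pinpoint that the single equality instance arises precisely at the $(p,k) = (2,1)$ endpoint of the prime-power analysis, where the exponent $p-1$ collapses to $1$ and the condition $d^{p-1} > 2$ degenerates to $d > 2$. Everywhere else the estimates are loose by an exponential factor, so no delicate argument is needed.
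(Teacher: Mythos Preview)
Your proof is correct and follows essentially the same strategy as the paper: both arguments use the identity $\sum_{n\mid N} D(n)=d^N$ from \eqref{eq:factorization} to reduce to the equivalent inequality $D(N)>d^N/2$, then bound $D(N)$ from below by discarding the negative M\"obius contributions and controlling the remaining sum via the largest proper divisor of $N$. The only cosmetic difference is in the case analysis: the paper uses the uniform bound $\lfloor N/2\rfloor$ for the largest proper divisor and then checks $N\in\{2,3\}$ for $d=2$ by hand, whereas you treat prime powers exactly via $D(p^k)=d^{p^k}-d^{p^{k-1}}$ (cleanly isolating the $(p,k)=(2,1)$ exception) and reserve the crude estimate for $N$ with at least two prime factors.
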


\begin{proof}
If $N = 1$, then the statement is trivial. We therefore assume $N \ge 2$.

Since the polynomial $f_{d,C}^N(X) - X$ has degree $d^N$ in $X$, we can see from \eqref{eq:factorization} that the sum appearing in the lemma is actually equal to $d^N - D(N)$. Hence, it suffices to prove the equivalent inequality
	\begin{equation}\label{eq:simpler_ineq}
		D(N) > \frac{1}{2} \cdot d^N.
	\end{equation}

We first obtain a rough lower bound for $D(N)$, using the fact that the largest proper divisor of $N$ has size at most $\lfloor N/2 \rfloor$:
\[
	D(N)
			= \sum_{n \mid N} \mu(N/n) d^n \ge d^N - \sum_{\substack{n \mid N \\ n < N}} d^n
			\ge d^N - \sum_{n=1}^{\lfloor N/2 \rfloor} d^n
			= d^N - \frac{d}{d-1} \cdot \left(d^{\lfloor N/2 \rfloor} - 1\right)
			> d^N - \frac{d}{d-1} \cdot d^{N/2}.
\]
It therefore suffices to show that
	\[
		\frac{d}{d-1} \cdot d^{N/2} \le \frac{1}{2} \cdot d^N,
	\]
which we can rearrange to become
	\begin{equation}\label{eq:even_simpler_ineq}
		d^{N/2 - 1} \ge \frac{2}{d-1}.
	\end{equation}
First, suppose $d = 2$. Then \eqref{eq:even_simpler_ineq} becomes
	\[ 2^{N/2 - 1} \ge 2, \]
which is satisfied for $N \ge 4$. For $N = 2$, we have $D(2) = 2 = D(1)$, which gives us the desired equality in this case. For $N = 3$, we have $D(3) = 6 > D(1)$.

Finally, when $d \ge 3$, we observe that the right hand side of \eqref{eq:even_simpler_ineq} is at most $1$, while the left hand side is at least $1$ when $N \ge 2$. Therefore \eqref{eq:even_simpler_ineq} is satisfied whenever $d \ge 3$ and $N \ge 2$, completing the proof.
\end{proof}

\subsection{Generalized dynatomic polynomials}

To say that a point $x \in K$ has portrait $(M,N)$ for $f_{d,c}$ is to say that $f_{d,c}^M(x)$ has period $N$ but $f_{d,c}^{M-1}(x)$ does not. For this reason, if $M$ and $N$ are positive integers, we define the \textbf{generalized dynatomic polynomial} $\Phi_{M,N}(X,C)$ to be the polynomial
	\begin{equation}\label{eq:gen_dyn}
		\Phi_{M,N}(X,C) := \frac{\Phi_N(f_{d,C}^M(X),C)}{\Phi_N(f_{d,C}^{M-1}(X),C)} \in \bbZ[X,C].
	\end{equation}
For convenience, we set $\Phi_{0,N} := \Phi_N$, and we again omit the dependence on $d$. That $\Phi_{M,N}$ is a polynomial is shown in \cite[Thm. 1]{hutz:2015}. If $(x,c) \in K^2$ satisfies $\Phi_{M,N}(x,c) = 0$, we will say that $x$ has \textbf{formal portrait} $(M,N)$ for $f_{d,c}$, and we similarly attach ``formal" to the terms ``preperiod" and ``eventual period" in this case. As in the periodic case, every point with exact portrait $(M,N)$ has formal portrait $(M,N)$, but a point with formal portrait $(M,N)$ may have exact portrait $(m,n)$ with $m < M$ or $n$ a proper divisor of $N$. It is again not difficult to see that $\Phi_{M,N}$ is monic in both $X$ and $C$, and that, when $M \ge 1$, $\Phi_{M,N}$ has degree $(d-1) d^{M-1}  D(N)$ in $X$ and degree $(d - 1)d^{M-2}D(N)$ in $C$.

Let $Y_1(M,N)$ denote the affine plane curve defined by $\Phi_{M,N}(X,C) = 0$. We call a curve defined in this way a \textbf{dynamical modular curve}. We summarize the relevant properties of $Y_1(M,N)$ in the following lemma:

\begin{lem}\label{lem:factors}
Let $K$ be an algebraically closed field of characteristic zero, and let $M \ge 0$ and $N \ge 1$ be integers.
\begin{enumerate}
	\item If $M = 0$, then the curve $Y_1(0,N)$ is nonsingular and irreducible over $K$.
	\item If $M \ge 1$, then for each $d$th root of unity $\zeta$, define
	\begin{equation}\label{eq:Psi}
		\Psi_{M,N}^{\zeta}(X,C) := \Phi_N(\zeta f_{d,C}^{M-1}(X),C).
	\end{equation}	
Then
	\begin{equation}\label{eq:factors}
		\Phi_{M,N}(X,C) = \prod_{\substack{\zeta^d = 1 \\ \zeta \ne 1}} \Psi_{M,N}^{\zeta}(X,C).
	\end{equation}
Each of the polynomials $\Psi_{M,N}^{\zeta}(X,C)$ is irreducible over $K$, so $Y_1(M,N)$ has exactly $(d-1)$ irreducible components. Each of the components is smooth, and the points of intersection of the components are precisely those points $(x,c)$ with $f_{d,c}^{M-1}(x) = 0$.
\end{enumerate}
\end{lem}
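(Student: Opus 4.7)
The two parts of the lemma are of rather different natures, and I would approach them separately. For part~(1), both irreducibility of $\Phi_N$ (as a polynomial over an algebraically closed field of characteristic zero) and smoothness of $Y_1(0, N)$ are classical results in the literature on dynatomic polynomials, and I would cite these directly. For a self-contained proof of smoothness, the key observation is that, by Lemma~\ref{lem:smaller_period}, the $X$-partial $\partial \Phi_N/\partial X$ can vanish on $Y_1(0, N)$ only at points $(x, c)$ where $x$ has exact period $n$ properly dividing $N$; at such points one shows $\partial \Phi_N / \partial C \ne 0$ by differentiating the factorization $f_{d, C}^N(X) - X = \prod_{m \mid N} \Phi_m(X, C)$ with respect to $C$ and tracking which factor survives.

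The core of part~(2) is the algebraic identity
\begin{equation*}
    \Phi_N(f_{d, C}(X), C) = \prod_{\zeta^d = 1} \Phi_N(\zeta X, C),
\end{equation*}
which I would establish by working over an algebraic closure of $K(C)$. If $\Phi_N(Y, C) = \prod_i (Y - \alpha_i(C))$ as a polynomial in $Y$, then for each root $\alpha_i$, its $d$ preimages under $f_{d, C}$ have the form $\zeta \alpha_j$ for $\zeta^d = 1$ and some $j$ in the same cycle as $i$. Matching up roots over this splitting field, both sides have the same $d \cdot D(N)$ linear factors in $X$ (with multiplicity), and a check of leading coefficients confirms equality—the ambiguous sign $((-1)^{d+1})^{D(N)}$ equals $1$ because $d \mid D(N)$ for every $N \ge 1$ and $d(d+1)$ is always even. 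Combining this identity with the trivial equality $\Phi_{M, N}(X, C) = \Phi_{1, N}(f_{d, C}^{M - 1}(X), C)$ (an immediate consequence of~\eqref{eq:gen_dyn}) and cancelling the $\zeta = 1$ factor yields the factorization~\eqref{eq:factors}.

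The main obstacle is establishing irreducibility of each factor $\Psi_{M, N}^\zeta$. For $M = 1$ this is immediate, since $\Psi_{1, N}^\zeta(X, C) = \Phi_N(\zeta X, C)$ is obtained from the irreducible $\Phi_N$ by the invertible linear substitution $X \mapsto X/\zeta$. For $M \ge 2$, one must show that $\Phi_N(\zeta f_{d, C}^{M - 1}(X), C)$ remains irreducible. My plan is to work over the function field $L := K(Y_1(0, N))$ and show that $\zeta f_{d, C}^{M - 1}(X) - Y_0 \in L[X]$ is irreducible, where $Y_0$ is the tautological periodic coordinate on $Y_1(0, N)$; this is equivalent to the transitivity of the monodromy of the cover $(X, C) \mapsto (\zeta f_{d, C}^{M - 1}(X), C)$ restricted over $Y_1(0, N)$. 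I would establish this either via a direct iterated-monodromy calculation for the unicritical polynomial $f_{d, C}$ or inductively on $M$ by repeated application of the stage-one identity.

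Finally, the intersection analysis is the cleanest part. If $(x, c) \in \Psi_{M, N}^\zeta \cap \Psi_{M, N}^{\zeta'}$ with $\zeta \ne \zeta'$, then both $\zeta f_{d, c}^{M - 1}(x)$ and $\zeta' f_{d, c}^{M - 1}(x)$ have formal period $N$; since $\Phi_N \mid f_{d, c}^N(X) - X$, both are actual periodic points of period dividing $N$. But both are preimages of the common point $f_{d, c}^M(x)$ under $f_{d, c}$, and a periodic point has a unique periodic preimage within its cycle, so the two preimages coincide, forcing $f_{d, c}^{M - 1}(x) = 0$. Conversely, when $f_{d, c}^{M - 1}(x) = 0$ we have $\Psi_{M, N}^\zeta(x, c) = \Phi_N(0, c)$ for every $\zeta$, so $(x, c)$ lies on all components simultaneously whenever $\Phi_N(0, c) = 0$. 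Smoothness of each component at non-intersection points then reduces, via chain-rule computations of $\partial \Psi_{M, N}^\zeta/\partial X$ and $\partial \Psi_{M, N}^\zeta/\partial C$, to the smoothness of $Y_1(0, N)$ established in part~(1).
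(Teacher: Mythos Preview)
The paper does not actually prove this lemma: it assembles citations (Douady--Hubbard, Bousch, Buff--Lei, Lau--Schleicher, Morton, Gao--Ou for part~(A); Gao for part~(B)), invokes the Lefschetz principle to pass from $\bbC$ to an arbitrary algebraically closed field of characteristic zero, and gives only a heuristic explanation of why the factorization \eqref{eq:factors} should hold. So your proposal is not really competing against a proof in the paper --- it is competing against the cited literature.

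With that in mind: your treatment of part~(A) matches the paper's (cite the literature). Your arguments for the factorization identity and for the description of the component intersections in part~(B) are correct and in fact more explicit than anything the paper writes down; the leading-coefficient check via $d \mid D(N)$ is a nice touch, and the uniqueness-of-periodic-preimage argument for the intersection locus is exactly right.

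The genuine gap is the irreducibility (and smoothness) of $\Psi_{M,N}^{\zeta}$ for $M \ge 2$. This is precisely the content of Gao's theorem that the paper cites, and it is the nontrivial part of the lemma. Your plan --- show that $\zeta f_{d,C}^{M-1}(X) - Y_0$ is irreducible over $K(Y_1(0,N))$ via monodromy or induction on $M$ --- is a reasonable strategy, but you have not carried it out, and the iterated-monodromy computation over the dynatomic curve is where the real work lies. Likewise, the reduction of component smoothness to smoothness of $Y_1(0,N)$ via the chain rule is fine away from the critical locus of $(X,C) \mapsto (\zeta f_{d,C}^{M-1}(X),C)$, but at points where $\partial_X f_{d,C}^{M-1}$ vanishes you need a separate argument. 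In short: everything you have written is sound, but on the one step that constitutes the substance of the lemma you are, like the paper, ultimately deferring to Gao.
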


Part (A) was originally proven in the $d = 2$ case by Douady and Hubbard (smoothness; \cite[\textsection XIV]{douady/hubbard:1985}), and Bousch (irreducibility; \cite[Thm. 1 (\textsection 3)]{bousch:1992}). A subsequent proof of (A) in the $d = 2$ case was later given by Buff and Lei \cite[Thm. 3.1]{buff/lei:2014}. For $d \ge 2$, irreducibility was proven by Lau and Schleicher \cite[Thm. 4.1]{lau/schleicher:1994} using analytic methods and by Morton \cite[Cor. 2]{morton:1996} using algebraic methods, and both irreducibility and smoothness were later proven by Gao and Ou \cite[Thms. 1.1, 1.2]{gao/ou} using the methods of Buff-Lei. Part (B) is due to Gao \cite[Thm. 1.2]{gao}. The lemma was originally proven over $\bbC$, but the Lefschetz principle allows us to extend the result to arbitrary fields of characteristic zero: since the curves $Y_1(M,N)$ are all defined over $\bbZ$, any singular points and irreducible components would be defined over a finitely generated extension of $\bbQ$, which could then be embedded into $\bbC$.

Finally, we briefly explain the factorization in \eqref{eq:factors}. If $x$ has portrait $(M,N)$ for $f_{d,c}$, then $f_{d,c}^M(x)$ is periodic of period $N$, so precisely one preimage of $f_{d,c}^M(x)$ is also periodic. The periodic preimage cannot be $f_{d,c}^{M-1}(x)$, since this would imply that $x$ has portrait $(m,N)$ for some $m \le M-1$. Since any two preimages of a given point under $f_{d,c}$ differ by a $d$th root of unity, this implies that $\zeta f_{d,c}^{M-1}(x)$ is periodic for some $d$th root of unity $\zeta \ne 1$, and therefore $\Psi^\zeta_{M,N}(x,c) = 0$ for that particular value of $\zeta$.

\section{Formal portraits and exact portraits}\label{sec:main_thm}

In order to prove Theorem~\ref{thm:main}, we must describe those conditions under which a point may have formal portrait different from its exact portrait under the map $f_{d,c}$. We begin by giving a necessary and sufficient condition for the exact preperiod of a point to be strictly less than its formal preperiod.

\begin{lem}\label{lem:smaller_preperiod}
Let $M,N \in \bbN$, and suppose $x$ has formal portrait $(M,N)$ for $f_{d,c}$. Then $x$ has exact preperiod strictly less than $M$ if and only if $f_{d,c}^{M-1}(x) = 0$. In this case, both $x$ and $c$ are algebraic integers and $0$ is periodic of period equal to $N$ (hence $x$ has eventual period $N$).
\end{lem}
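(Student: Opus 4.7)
The plan is to exploit the factorization
\[
	\Phi_{M,N}(x,c) = \prodzeta \Phi_N\bigl(\zeta f_{d,c}^{M-1}(x),c\bigr)
\]
from Lemma~\ref{lem:factors}(B). The hypothesis $\Phi_{M,N}(x,c) = 0$ forces some nontrivial $d$th root of unity $\zeta$ to satisfy $\Phi_N(\zeta f_{d,c}^{M-1}(x),c) = 0$; since $\Phi_N$ divides $f_{d,C}^N(X) - X$ by \eqref{eq:factorization}, the point $\zeta f_{d,c}^{M-1}(x)$ is then honestly periodic of period dividing $N$ under $f_{d,c}$, and (because $\zeta^d = 1$) it is a preimage of $f_{d,c}^M(x)$.

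For the forward direction I would argue contrapositively: assume $f_{d,c}^{M-1}(x) \ne 0$ and show the exact preperiod of $x$ equals $M$. Under this assumption the $d$ preimages $\zeta f_{d,c}^{M-1}(x)$ (for $\zeta^d = 1$) of $f_{d,c}^M(x)$ are all distinct. If the exact preperiod of $x$ were strictly less than $M$, then $f_{d,c}^{M-1}(x)$ itself would already be periodic, exhibiting two distinct periodic preimages of the periodic point $f_{d,c}^M(x)$: the $\zeta = 1$ preimage $f_{d,c}^{M-1}(x)$ and the $\zeta \ne 1$ preimage produced in the first paragraph. This contradicts the standard fact, already invoked in \textsection \ref{sec:intro}, that a periodic point has a unique periodic preimage.

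For the reverse direction, suppose $f_{d,c}^{M-1}(x) = 0$. Then every factor in the product collapses to $\Phi_N(0,c)$, so $\Phi_{M,N}(x,c) = \Phi_N(0,c)^{d-1}$, and the hypothesis forces $\Phi_N(0,c) = 0$; that is, $0$ has formal period $N$ for $f_{d,c}$. The crucial input now is Lemma~\ref{lem:smaller_period}: if the exact period of $0$ were a proper divisor of $N$, then $0$ would lie in $\QQbar \setminus \Zbar$, which is absurd. Hence $0$ has exact period $N$, so $f_{d,c}^{M-1}(x) = 0$ is periodic of period $N$, the exact preperiod of $x$ is at most $M-1$, and its eventual period is $N$. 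The integrality then falls out for free: $f_{d,c}^N(0) = 0$ is monic in $c$ over $\bbZ$ (by a trivial induction on $N$), giving $c \in \Zbar$, and $x$ satisfies $f_{d,c}^{M-1}(X) = 0$, which is monic in $X$ over $\bbZ[c]$, giving $x \in \Zbar$. I expect the main subtlety to be arranging the case split cleanly --- the forward direction is a rigidity argument that genuinely requires the $d$ preimages to be distinct (which fails exactly when $f_{d,c}^{M-1}(x) = 0$), while the reverse direction pivots on Lemma~\ref{lem:smaller_period} together with $0 \in \Zbar$ to upgrade formal periodicity to exact periodicity at the critical point.
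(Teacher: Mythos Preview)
Your proof is correct and follows essentially the same approach as the paper's: both directions hinge on the factorization in Lemma~\ref{lem:factors}(B), the ``unique periodic preimage'' principle for the forward direction, and Lemma~\ref{lem:smaller_period} applied at $0 \in \Zbar$ to upgrade formal to exact period $N$ for the reverse direction. The only cosmetic differences are that the paper argues the forward direction directly rather than contrapositively, and deduces $c \in \Zbar$ from $\Phi_N(0,C)$ being monic rather than from $f_{d,C}^N(0)$; these are equivalent.
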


\begin{proof}
First, suppose $x$ has exact preperiod $m < M$ for $f_{d,c}$. By Lemma~\ref{lem:factors}, since $x$ has formal portrait $(M,N)$ for $f_{d,c}$, we must have
	\[ \Phi_N(\zeta f_{d,c}^{M-1}(x),c) = 0 \]
for some $d$th root of unity $\zeta \ne 1$. Hence $\zeta f_{d,c}^{M-1}(x)$ is periodic. On the other hand, $f_{d,c}^{M-1}(x)$ is also periodic, since $f_{d,c}^m(x)$ is periodic and $m \le M - 1$. Both $\zeta f_{d,c}^{M-1}(x)$ and $f_{d,c}^{M-1}(x)$ are preimages of $f_{d,c}^M(x)$; since a point can only have a single periodic preimage, it follows that $\zeta f_{d,c}^{M-1}(x) = f_{d,c}^{M-1}(x)$, which then implies that $f_{d,c}^{M-1}(x) = 0$.

Conversely, suppose that $f_{d,c}^{M-1}(x) = 0$. Since $x$ has formal portrait $(M,N)$ for $f_{d,c}$, the factorization in Lemma~\ref{lem:factors} implies that $\Phi_N(0,c) = 0$, so $0$ is periodic for $f_{d,c}$. In particular, this means that the preperiod of $x$ is at most $M - 1$.

The fact that $\Phi_N(0,c) = 0$ implies that $0$ is periodic for $f_{d,c}$ and, since $\Phi_N(0,C)$ is monic in $C$, that $c \in \Zbar$. Moreover, since $f_{d,c}^{M-1}(x) = 0$, we also conclude that $x \in \Zbar$. The final claim --- that the period of $0$ (and hence the eventual period of $x$) is equal to $N$ --- follows from Lemma~\ref{lem:smaller_period}.
\end{proof}

As a consequence of Lemma~\ref{lem:smaller_preperiod}, we see that if $x$ has formal portrait $(M,N)$ and exact portrait $(m,n)$ for $f_{d,c}$, then either $m = M$ or $n = N$. We can actually say a bit more, using the fact that if $x$ is preperiodic for $f_{d,c}$ --- which is necessarily the case if $\Phi_{M,N}(x,c) = 0$ --- then $x \in \Zbar$ if and only if $c \in \Zbar$.

\begin{lem}\label{lem:options}
Let $x \in K$, and let $c_1,\ldots,c_n$ be the roots of $\Phi_{M,N}(x,C) \in K[C]$. Then one of the following must be true:
	\begin{enumerate}
		\item for all $i \in \{1,\ldots,n\}$, $x$ has preperiod equal to $M$ for $f_{d,c_i}$; or
		\item for all $i \in \{1,\ldots,n\}$, $x$ has eventual period equal to $N$ for $f_{d,c_i}$.
	\end{enumerate}
\end{lem}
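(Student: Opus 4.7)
The plan is to argue by contradiction that (A) and (B) cannot both fail, specifically by assuming (A) fails and deducing (B). Suppose some index $i_0$ has the property that the exact preperiod of $x$ under $f_{d,c_{i_0}}$ is strictly less than $M$. Since $\Phi_{M,N}(x,c_{i_0})=0$ by definition of the $c_i$, Lemma~\ref{lem:smaller_preperiod} applies, and its most important consequence for present purposes is that $x\in\Zbar$.

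From $x\in\Zbar$ I would upgrade the conclusion to $c_i\in\Zbar$ for \emph{every} $i$ by a short integrality argument: the polynomial $\Phi_{M,N}(X,C)\in\bbZ[X,C]$ is monic in $C$, so $\Phi_{M,N}(x,C)$ is a monic polynomial in $\Zbar[C]$, and each of its roots $c_i$ is integral over $\Zbar$, hence in $\Zbar$. Next I fix an arbitrary $i$ and let $(m_i,n_i)$ denote the exact portrait of $x$ for $f_{d,c_i}$. By the remark recorded immediately after Lemma~\ref{lem:smaller_preperiod}, either $m_i=M$ or $n_i=N$. Suppose for contradiction that $n_i$ is a proper divisor of $N$. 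Then $y:=f_{d,c_i}^{M}(x)$ has formal period $N$ but exact period $n_i<N$, so Lemma~\ref{lem:smaller_period} forces $c_i\in\QQbar\setminus\Zbar$, contradicting what was just established. Therefore $n_i=N$ for every $i$, which is statement (B).

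The proof is essentially an assembly of the two preceding lemmas, bridged by the monicity of $\Phi_{M,N}$ in the parameter $C$: Lemma~\ref{lem:smaller_preperiod} says that any failure of (A) drags $x$, and hence (by integrality) every $c_i$, into $\Zbar$, while Lemma~\ref{lem:smaller_period} says that any failure of (B) drags the responsible $c_i$ \emph{out} of $\Zbar$. I do not anticipate a serious obstacle; the only step requiring real thought is the promotion from the single parameter $c_{i_0}\in\Zbar$ supplied by Lemma~\ref{lem:smaller_preperiod} to the full conclusion $c_i\in\Zbar$ for all $i$, and this is immediate from monicity once one notices that $x$ itself is the common input to each $\Phi_{M,N}(x,C)=0$.
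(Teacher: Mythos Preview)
Your proof is correct and uses the same ingredients as the paper's --- Lemmas~\ref{lem:smaller_preperiod} and~\ref{lem:smaller_period} together with the monicity of $\Phi_{M,N}$ in $C$ --- assembled in essentially the same way. The only cosmetic difference is that the paper organizes the argument as a direct dichotomy on whether $x \in \Zbar$ (if $x \in \Zbar$ then (B) holds; if $x \notin \Zbar$ then (A) holds), whereas you take the logically equivalent route of assuming (A) fails, extracting $x \in \Zbar$ from the witnessing index, and then deducing (B).
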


\begin{proof}
Let $i \in \{1,\ldots,n\}$ be arbitrary. If $x \in \Zbar$, then $c_i \in \Zbar$, and therefore $f_{d,c_i}^M(x) \in \Zbar$. Since $f_{d,c_i}^M(x)$ has formal period $N$, Lemma~\ref{lem:smaller_period} implies that $f_{d,c_i}^M(x)$ must have \emph{exact} period $N$, and therefore $x$ has eventual period $N$ for $f_{d,c_i}$. On the other hand, if $x \not \in \Zbar$, then it follows from Lemma~\ref{lem:smaller_preperiod} that $x$ must have preperiod equal to $M$ for $f_{d,c_i}$.
\end{proof}

Now let $x \in K$ be such that $x$ does not realize portrait $(M,N)$ in degree $d$. It follows from Lemma~\ref{lem:options} that either $x$ has preperiod strictly less than $M$ for $f_{d,c}$ for every root $c$ of $\Phi_{M,N}(x,C)$, or $x$ has eventual period strictly less than $N$ for all such maps $f_{d,c}$. We handle these two cases separately.

\subsection{Eventual period less than formal eventual period}\label{sec:smaller_period}

Throughout this section, we suppose the tuple $(x,M,N,d) \in K \times \bbZ^3$, with $M \ge 0$, $N \ge 1$, and $d \ge 2$, satisfies the following condition:
\begin{equation}\label{eq:c1}
\tag{$*$}
\text{For all roots $c$ of $\Phi_{M,N}(x,C) \in K[C]$, $x$ has eventual period strictly less than $N$ for $f_{d,c}$}.
\end{equation}

Now fix one such root $c \in K$, and assume for the moment that $M \ge 1$. By Lemma~\ref{lem:factors}, $\zeta f_{d,c}^{M-1}(x)$ is periodic for some root of unity $\zeta \ne 1$. The period of $\zeta f_{d,c}^{M-1}(x)$ is equal to the period of $f_{d,c}(\zeta f_{d,c}^{M-1}(x)) = f_{d,c}^M(x)$, which is less than $N$ by \eqref{eq:c1}. Lemma~\ref{lem:smaller_period} then implies that
	\[
		\left. \frac{\partial \Phi_N(Z,C)}{\partial Z} \right|_{(\zeta f_{d,c}^{M-1}(x),c)} = 0.
	\]
Therefore, using the factorization appearing in Lemma~\ref{lem:factors} and applying the chain rule, we have
	\begin{equation}\label{eq:ddX}
		\left. \frac{\partial \Phi_{M,N}(X,C)}{\partial X} \right|_{(x,c)} = 0.
	\end{equation}

Note that if $M = 0$, then \eqref{eq:ddX} holds immediately by Lemma~\ref{lem:smaller_period}. In this case, since $Y_1(0,N)$ is nonsingular for all $N \ge 1$, we conclude that $\frac{\partial}{\partial C} \Phi_{0,N}(X,C)$ does not vanish at $(x,c)$. The same is true for $M \ge 1$: Indeed, by Lemma~\ref{lem:options}, $x$ must have preperiod equal to $M$ for $f_{d,c}$, and therefore $f_{d,c}^{M-1}(x) \ne 0$ by Lemma~\ref{lem:smaller_preperiod}. It then follows from Lemma~\ref{lem:factors} that $(x,c)$ is a nonsingular point on $Y_1(M,N)$, so the $C$-partial of $\Phi_{M,N}(X,C)$ cannot vanish at $(x,c)$.

In any case, we have shown that each root of $\Phi_{M,N}(x,C) \in K[C]$ is a \emph{simple} root, so the number of distinct roots of $\Phi_{M,N}(x,C)$ is precisely
	\[
		\deg_C \Phi_{M,N} =
			\begin{cases}
				\dfrac{1}{d}D(N), &\mbox{ if } M = 0;\\
				(d - 1)d^{M-2} D(N) , &\mbox{ if } M \ge 1.
			\end{cases}
	\]
On the other hand, since every root satisfies $\Phi_{M,n}(x,c) = 0$ for some $n$ strictly dividing $N$, the number of roots of $\Phi_{M,N}(x,C)$ can be at most
	\[
		\sum_{\substack{n \mid N \\ n < N}} \deg_C \Phi_{M,n} =
			\begin{cases}
			\dfrac{1}{d}\sum\limits_{\substack{n \mid N \\ n < N}} D(n) , &\mbox{ if } M = 0; \\
			(d-1) d^{M-2} \sum\limits_{\substack{n \mid N \\ n < N}} D(n) , &\mbox{ if } M \ge 1.
			\end{cases}
	\]
In particular, this means that
	\[
		D(N) \le \sum_{\substack{n \mid N \\ n < N}} D(n),
	\]
which implies that $N = d = 2$ by Lemma~\ref{lem:d(N)}.  We assume henceforth that $(N,d) = (2,2)$.

Suppose $M = 0$. In this case, \eqref{eq:c1} says that for every $c \in K$ with $\Phi_2(x,c)$ = 0 we also have $\Phi_1(x,c) = 0$. In the $d = 2$ case, we have
	\[
		\Phi_1(X,C) = X^2 - X + C \ , \ \Phi_2(X,C) = X^2 + X + C + 1.
	\]
The condition $\Phi_2(x,c) = \Phi_1(x,c) = 0$ implies that $(x,c) = (-1/2,-3/4)$. Therefore, if $x \ne -1/2$ and $\Phi_2(x,c) = 0$, then $x$ has exact period $2$ for $f_{2,c}$.

Now suppose $M = 1$, and let $x \in K$ with $x \ne 1/2$. By the previous paragraph, there exists $c \in K$ for which $\Phi_2(-x,c) = 0$ and $-x$ has period 2 under $f_{2,c}$. Since $d = 2$, Lemma~\ref{lem:factors} yields
	\[ \Phi_{1,2}(X,C) = \Phi_2(-X,C), \]
so for this particular value of $c$ we have $\Phi_{1,2}(x,c) = 0$. Moreover, since $f_{2,c}(x) = f_{2,c}(-x)$ has period 2, $x$ has eventual period 2 for $f_{2,c}$.

Finally, consider the case $M \ge 2$. Let $c \in K$ satisfy $\Phi_{M,2}(x,c) = 0$. By hypothesis, $x$ has portrait $(M,1)$ for $f_{2,c}$, which implies that
	\[ \Phi_2(f_{2,c}^M(x),c) = \Phi_1(f_{2,c}^M(x),c) = 0. \]
As explained above, this means that $c = -3/4$; in particular, the polynomial $\Phi_{M,2}(x,C)$ has only the single root $c = -3/4$. Since $\Phi_{M,2}(X,C)$ has degree $2^{M-2} D(2) \ge 2$ in $C$, the root $c = -3/4$ must be a multiple root of $\Phi_{M,2}(x,C)$, contradicting our previous assertion that $\Phi_{M,N}(x,C)$ has only simple roots.

We have shown that if $(x,M,N,d)$ satisfies \eqref{eq:c1}, then $(N,d) = (2,2)$ and $(x,M) \in \{(-1/2,0),(1/2,1)\}$. From this, we draw the following conclusion:

\begin{prop}\label{prop:small_period}
Let $(x,M,N,d) \in K \times \bbZ^3$ with $M \ge 0$, $N \ge 1$, and $d \ge 2$. Suppose that
	\[
		(x,M,N,d) \not \in \left\{\left(-\frac{1}{2},0,2,2\right), \left(\frac{1}{2},1,2,2\right) \right \}.
	\]
Then there exists $c \in K$ with $\Phi_{M,N}(x,c) = 0$ for which $x$ has eventual period equal to $N$ for $f_{d,c}$.
\end{prop}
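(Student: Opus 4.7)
The plan is to prove the contrapositive: assuming condition~\eqref{eq:c1}, I will show that $(x,M,N,d)$ must be one of the two exceptional tuples.

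The first key step is to observe that under~\eqref{eq:c1}, every root $c$ of $\Phi_{M,N}(x,C)$ is in fact a \emph{simple} root. To see this, combine the factorization in Lemma~\ref{lem:factors} with the chain rule: the $X$-partial of $\Phi_{M,N}$ at $(x,c)$ reduces (up to nonzero factors) to $(\partial_Z \Phi_N)(\zeta f_{d,c}^{M-1}(x),c)$ for some $d$th root of unity $\zeta \ne 1$, and this vanishes by Lemma~\ref{lem:smaller_period}, since the exact period of $\zeta f_{d,c}^{M-1}(x)$ equals the eventual period of $x$, which by~\eqref{eq:c1} is strictly less than $N$. On the other hand, Lemma~\ref{lem:options} forces $x$ to have preperiod exactly $M$ for $f_{d,c}$, and when $M \ge 1$ Lemma~\ref{lem:smaller_preperiod} then rules out $f_{d,c}^{M-1}(x) = 0$. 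Hence Lemma~\ref{lem:factors} shows that $(x,c)$ is a smooth point of $Y_1(M,N)$, so the $C$-partial cannot vanish, and $c$ is simple.

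Simplicity gives a clean root count. The number of roots of $\Phi_{M,N}(x,C)$ equals $\deg_C \Phi_{M,N}$, which is a fixed positive multiple of $D(N)$ depending only on $M$ and $d$. By~\eqref{eq:c1}, the exact eventual period at each such root is some $n \mid N$ with $n < N$, so the set of roots is contained in $\bigcup_{n \mid N,\, n < N}\{c : \Phi_{M,n}(x,c) = 0\}$, whose size is bounded by the same multiple of $\sum_{n \mid N,\, n < N} D(n)$. Cancelling the common prefactor yields
\[
    D(N) \;\le\; \sum_{\substack{n \mid N \\ n < N}} D(n),
\]
which by Lemma~\ref{lem:d(N)} forces $(N,d) = (2,2)$.

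It remains to finish the $(N,d) = (2,2)$ case by a short analysis in $M$. For $M = 0$ I will use the explicit formulas $\Phi_1(X,C) = X^2 - X + C$ and $\Phi_2(X,C) = X^2 + X + C + 1$, whose only common zero is $(-1/2,-3/4)$, so $x = -1/2$. For $M = 1$, the identity $\Phi_{1,2}(X,C) = \Phi_2(-X,C)$ coming from Lemma~\ref{lem:factors} transfers the previous analysis to give $x = 1/2$. For $M \ge 2$, every root of $\Phi_{M,2}(x,C)$ forces $f_{2,c}^M(x)$ to be a fixed point, so $c = -3/4$ is the only candidate; but $\deg_C \Phi_{M,2} \ge 2$, so this would make $c = -3/4$ a multiple root, contradicting the simplicity established above. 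The main obstacle is the simplicity step, since it is the only place where the dynamical geometry (the smoothness statement in Lemma~\ref{lem:factors} and the vanishing in Lemma~\ref{lem:smaller_period}) genuinely interacts with~\eqref{eq:c1}; once simplicity is in hand, the degree inequality and Lemma~\ref{lem:d(N)} do the remaining work essentially automatically.
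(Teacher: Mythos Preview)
Your proposal is correct and follows essentially the same route as the paper's own argument in \textsection\ref{sec:smaller_period}: establish simplicity of the roots of $\Phi_{M,N}(x,C)$ via the vanishing of the $X$-partial (Lemma~\ref{lem:smaller_period} plus the chain rule) together with smoothness at $(x,c)$ (Lemma~\ref{lem:factors}), then compare degrees to force $(N,d)=(2,2)$ via Lemma~\ref{lem:d(N)}, and finish with the explicit case analysis in $M$. The only cosmetic gap is that your phrasing of the $X$-partial step and the smoothness step is written for $M\ge 1$; for $M=0$ the $X$-partial vanishes directly by Lemma~\ref{lem:smaller_period} and smoothness is Lemma~\ref{lem:factors}(A), exactly as the paper notes.
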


If $(x,M,N,d)$ is any exception to Theorem~\ref{thm:main} not appearing in Proposition~\ref{prop:small_period}, then for every root $c$ of $\Phi_{M,N}(x,C)$, $x$ must have exact preperiod less than $M$ for $f_{d,c}$. We now consider this situation.

\subsection{Preperiod less than formal preperiod}\label{sec:smaller_preperiod}

Suppose now that $(x,M,N,d) \in K \times \bbZ^3$, with $M \ge 0$, $N \ge 1$, $d \ge 2$, satisfies the following condition:
\begin{equation}\label{eq:c2}
\tag{$**$}
\text{For all roots $c$ of $\Phi_{M,N}(x,C) \in K[C]$, $x$ has preperiod strictly less than $M$ for $f_{d,c}$}.
\end{equation}
For all such roots $c$, Lemma~\ref{lem:smaller_preperiod} implies that $f_{d,c}^{M-1}(x) = 0$ is periodic of period $N$, and therefore $x$ must have eventual period equal to $N$ for $f_{d,c}$.

If $M = 1$, then $f_{d,c}^{M-1}(x) = 0$ means precisely that $x = 0$, and we have already seen that $0$ cannot have portrait $(1,N)$ for $f_{d,c}$ for any $N \ge 1$ and $c \in K$. We will therefore assume that $M \ge 2$.

We first prove an elementary lemma.

\begin{lem}\label{lem:mult_root}
Suppose \eqref{eq:c2} is satisfied, and let $\zeta$ be a $d$th root of unity. If $M \ge 2$, then the polynomial $\Psi_{M,N}^{\zeta}(x,C) \in K[C]$ has a multiple root.
\end{lem}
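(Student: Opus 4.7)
Set $g(C):=f_{d,C}^{M-1}(x)\in K[C]$; a short induction using $f_{d,C}^{n}=(f_{d,C}^{n-1})^d+C$ shows that $g$ is monic in $C$ of degree $d^{M-2}$. The plan is a counting argument, focused on the substantive case $\zeta\ne 1$. First, I would apply \eqref{eq:c2} together with Lemma~\ref{lem:smaller_preperiod} to see that every root $c$ of $\Phi_{M,N}(x,C)$ satisfies $f_{d,c}^{M-1}(x)=g(c)=0$. Since by Lemma~\ref{lem:factors}(B) the polynomial $\Psi_{M,N}^{\zeta}(x,C)$ divides $\Phi_{M,N}(x,C)$, every root of $\Psi_{M,N}^{\zeta}(x,C)$ is already a root of $g$, so $\Psi_{M,N}^{\zeta}(x,C)$ has at most $d^{M-2}$ distinct roots in $K$.

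Next, I would show that $\deg_C\Psi_{M,N}^{\zeta}(x,C)=d^{M-2}D(N)$. The leading $Y$-term of $\Phi_N(Y,C)$, namely $Y^{D(N)}$ with coefficient $1$, contributes $\zeta^{D(N)}g(C)^{D(N)}$ to $\Psi_{M,N}^{\zeta}(x,C)=\Phi_N(\zeta g(C),C)$; this is a polynomial of $C$-degree exactly $d^{M-2}D(N)$ with nonzero leading coefficient $\zeta^{D(N)}$, so $\deg_C\Psi_{M,N}^{\zeta}(x,C)\ge d^{M-2}D(N)$ for every $\zeta\ne 1$. On the other hand, since $\Phi_{M,N}(X,C)$ is monic in $C$ of degree $(d-1)d^{M-2}D(N)$, the factorization in Lemma~\ref{lem:factors}(B) gives
\[
\sum_{\zeta^d=1,\,\zeta\ne 1}\deg_C\Psi_{M,N}^{\zeta}(x,C)=\deg_C\Phi_{M,N}(x,C)=(d-1)d^{M-2}D(N).
\]
The sum has exactly $d-1$ terms, each bounded below by $d^{M-2}D(N)$, so equality is forced throughout, and $\deg_C\Psi_{M,N}^{\zeta}(x,C)=d^{M-2}D(N)$ for every $\zeta\ne 1$.

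Combining the two estimates, since $D(N)\ge 2$ we have $d^{M-2}D(N)>d^{M-2}$, so $\Psi_{M,N}^{\zeta}(x,C)$ has strictly more roots counted with multiplicity than distinct values, which forces a multiple root. The main obstacle is the degree computation: a direct upper bound on $\deg_C\Psi_{M,N}^{\zeta}(x,C)$ would require tracking the weight structure of $\Phi_N(Y,C)$ (that its monomials $Y^iC^j$ satisfy $i+dj\le D(N)$), but this is bypassed cleanly by combining the elementary lower bound from the leading $Y^{D(N)}$ term with the sum identity forced by the monicity of $\Phi_{M,N}$ in $C$.
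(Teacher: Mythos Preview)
Your argument is the same counting argument as the paper's: every root of $\Psi_{M,N}^{\zeta}(x,C)$ is a root of $g(C)=f_{d,C}^{M-1}(x)$ (via the factorization in Lemma~\ref{lem:factors} together with \eqref{eq:c2} and Lemma~\ref{lem:smaller_preperiod}), so there are at most $d^{M-2}$ distinct roots, while $\deg_C\Psi_{M,N}^{\zeta}(x,C)=d^{M-2}D(N)>d^{M-2}$. The paper simply asserts this degree and proceeds; your restriction to $\zeta\ne 1$ is appropriate, since the appeal to Lemma~\ref{lem:factors}(B) requires it, and this is the only case that is used later.

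The one genuine gap is in your ``bypass'' of the degree computation. Observing that the monomial $Y^{D(N)}$ contributes $\zeta^{D(N)}g(C)^{D(N)}$ does \emph{not}, by itself, give the lower bound $\deg_C\Psi_{M,N}^{\zeta}(x,C)\ge d^{M-2}D(N)$: monomials $Y^iC^j$ of $\Phi_N$ with $i<D(N)$ could, after substituting $Y=\zeta g(C)$, land in $C$-degree $\ge d^{M-2}D(N)$ and cancel the contribution from $Y^{D(N)}$. Without the lower bound, your sum identity only gives the total $(d-1)d^{M-2}D(N)$, not the individual degrees. In fact the weight inequality $i+dj\le D(N)$ that you mention is exactly what is needed, and it resolves the issue directly: it forces $i\,d^{M-2}+j\le D(N)d^{M-2}$ with equality only at $(i,j)=(D(N),0)$, so the leading $C$-coefficient of $\Psi_{M,N}^{\zeta}(x,C)$ is the constant $\zeta^{D(N)}\ne 0$ and the degree is exactly $d^{M-2}D(N)$ even after specializing $X=x$. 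So the route you tried to bypass is the one that actually closes the argument; the ``elementary lower bound plus sum identity'' does not.
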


\begin{proof}
Let $c$ be any root of $\Psi_{M,N}^{\zeta}(x,C)$. By Lemma~\ref{lem:factors}, this implies that $\Phi_{M,N}(x,c) = 0$, so we have $f_{d,c}^{M-1}(x) = 0$ by the assumption in \eqref{eq:c2}. Therefore $\Psi_{M,N}^{\zeta}(x,C)$ has at most
	\[
		\deg_C f_{d,C}^{M-1}(X) = d^{M-2}
	\]
distinct roots $c$. On the other hand, the degree (in $C$) of $\Psi_{M,N}^{\zeta}(x,C)$ satisfies
	\[
		\deg_C \Psi_{M,N}^{\zeta}(X,C) = \deg_C \Phi_N(\zeta f_{d,C}^{M-1}(X),C) = d^{M-2} D(N) > d^{M-2},
	\]
so $\Psi_{M,N}^{\zeta}(X,C)$ must have a multiple root.
\end{proof}

We now show that, in \emph{most} cases, if $f_{d,c}^{M-1}(x) = 0$ and $\Phi_{M,N}(x,c) = 0$, then $c$ must actually be a simple root of the polynomial $\Psi_{M,N}^{\zeta}(x,C)$ when $\zeta$ is a \emph{primitive} $d$th root of unity. Such cases contradict Lemma~\ref{lem:mult_root}, and therefore \eqref{eq:c2} must fail in these cases.

\begin{lem}\label{lem:simple_root}
Let $(M,N,d) \in \bbZ^3$ with $N \ge 1$; $M,d \ge 2$; and $(M,N,d) \ne (2,2,2)$. Let $\zeta$ be a primitive $d$th root of unity, and suppose $(x,c) \in K^2$ satisfies $\Psi_{M,N}^{\zeta}(x,c) = 0 = f_{d,c}^{M-1}(x)$. Then $c$ is a simple root of $\Psi_{M,N}^{\zeta}(x,C) \in K[C]$.
\end{lem}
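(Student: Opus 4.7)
The plan is to show that $\partial_C \Psi_{M,N}^\zeta(x,C)\big|_{C=c} \neq 0$; combined with $\Psi_{M,N}^\zeta(x, c) = 0$, this gives that $c$ is a simple root of $\Psi_{M,N}^\zeta(x, C)$.

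First, applying the chain rule to $\Psi_{M,N}^\zeta(X, C) = \Phi_N(\zeta f_{d,C}^{M-1}(X), C)$ and using $f_{d,c}^{M-1}(x) = 0$, I obtain
\[
\partial_C \Psi_{M,N}^\zeta(x, c) \;=\; \partial_Z \Phi_N(0, c) \cdot \zeta\alpha \;+\; \partial_C \Phi_N(0, c), \qquad \alpha := \partial_C f_{d,C}^{M-1}(x)\bigl|_{C=c}.
\]
Since $\Psi_{M,N}^\zeta(x, c) = 0$ implies $\Phi_{M,N}(x, c) = 0$, the hypothesis $f_{d,c}^{M-1}(x) = 0$ and Lemma~\ref{lem:smaller_preperiod} together give that $x, c \in \Zbar$ and that $0$ has exact period $N$ for $f_{d,c}$. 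In the product $\Phi_N(Z, C) = \prod_{n \mid N}(f_{d,C}^n(Z) - Z)^{\mu(N/n)}$, only the $n = N$ factor vanishes at $(0, c)$. Writing $\Phi_N = (f_{d,C}^N(Z) - Z) \cdot H$ locally with $H(0, c) = A := \prod_{n \mid N,\; n < N} f_{d,c}^n(0)^{\mu(N/n)} \neq 0$, and using that $(f_{d,c}^n)'(0) = 0$ for all $n \geq 1$ (since $0$ is the critical point of $f_{d,c}$), one computes
\[
\partial_Z \Phi_N(0, c) = -A, \qquad \partial_C \Phi_N(0, c) = A \beta_N, \qquad \beta_N := \partial_C f_{d,C}^N(0)\bigl|_{C=c}.
\]
Therefore $\partial_C \Psi_{M,N}^\zeta(x, c) = A \cdot (\beta_N - \zeta \alpha)$, and it suffices to show $\beta_N \neq \zeta \alpha$.

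For this, $\alpha, \beta_N \in \Zbar$, and the recursion $\partial_C f_{d,C}^k(X) = d \cdot f_{d,C}^{k-1}(X)^{d-1} \partial_C f_{d,C}^{k-1}(X) + 1$ (with $\partial_C f_{d,C}^0 = 0$) gives by induction that $\alpha \equiv \beta_N \equiv 1 \pmod{d\Zbar}$, so
\[
\beta_N - \zeta \alpha \;\equiv\; 1 - \zeta \pmod{d\Zbar}.
\]
For $d \geq 3$ and $\zeta$ a primitive $d$th root of unity, $(1 - \zeta)/d$ is not an algebraic integer: its $\bbQ$-norm is $\Phi_d^{\mathrm{cyc}}(1)/d^{\varphi(d)}$, which is never a rational integer for $d \geq 3$ (the numerator lies in $\{1, p\}$ while the denominator is at least $9$). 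Hence $1 - \zeta \notin d\Zbar$, and $\beta_N \neq \zeta \alpha$.

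The case $d = 2$, $\zeta = -1$ is the main obstacle and requires the hypothesis $(M, N) \neq (2, 2)$. Here $1 - \zeta = 2 \in d\Zbar$, so the congruence argument alone is inconclusive; instead, I rule out $\alpha + \beta_N = 0$ by a finer $2$-adic analysis. Writing $u_k := f_{2,c}^k(0)$, the recursion $\beta_N + 1 = 2(u_{N-1}\beta_{N-1} + 1)$ together with the analogous recursion $\alpha - 1 = 2 y_{M-2} \sigma_{M-2}$ (where $y_k := f_{2,c}^k(x)$ and $\sigma_k := \partial_C f_{2,C}^k(x)|_c$) reduces $\alpha + \beta_N = 0$ to a chain of divisibility relations. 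Reducing modulo $2\Zbar$, the critical-orbit identity $\bar u_k = \sum_{i=0}^{k-1}\bar c^{2^i}$ in $\bar{\bbF}_2$ combined with $\bar u_N = 0$ forces $\bar c = 1$, and a subsequent reduction modulo $4\Zbar$ using this congruence forces $N = 2$. Combined with the case analysis $M = 2$ versus $M \geq 3$, this gives $(M, N) = (2, 2)$, contradicting the hypothesis. This case is where the exception $(N, d) = (2, 2)$ in Lemma~\ref{lem:d(N)} manifests, and it is the hardest part of the argument.
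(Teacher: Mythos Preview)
Your overall strategy coincides with the paper's. Both reduce to showing that a single algebraic integer is nonzero, and it is in fact the \emph{same} integer: your $\beta_N - \zeta\alpha$ equals the paper's $\delta$, since $\partial_C f_{d,C}^{M+N-1}(x)\big|_c = (\partial_Z f_{d,c}^N)(0)\cdot\alpha + \beta_N = \beta_N$ (the first term vanishes because $0$ is critical). Your setup via the chain rule and the local factorization $\Phi_N = (f_{d,C}^N(Z)-Z)\cdot H$ is a clean alternative to the paper's passage to the polynomial $f_{d,C}^{M+N-1}(x) - \zeta f_{d,C}^{M-1}(x)$, and it has the minor advantage of not needing the divisibility $\Psi_{M,N}^\zeta \mid (f^{M+N-1}-\zeta f^{M-1})$; the cost is checking $A\ne 0$, which follows from exact period $N$. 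Your $d\ge 3$ argument is correct and matches the paper's, and your $d=2$, $M\ge 3$ sketch, once one uses $y_{M-2}=\pm u_{N-1}$ (both are $f_{2,c}$-preimages of $0$) to get $\bar y_{M-2}+\bar u_{N-1}=0\ne 1$ in the residue field, is exactly the paper's Case~1.

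The genuine gap is $d=2$, $M=2$. Here $\alpha=\sigma_1=1$, so the task is to show $\beta_N\ne -1$ whenever $0$ has exact period $N\ne 2$. Your mod-$2$ step correctly yields $\bar u_{N-1}=1$ and hence $\bar c=1$; but your claim that ``a subsequent reduction modulo $4\Zbar$ \ldots\ forces $N=2$'' is not substantiated. Carrying the recursion one step further gives only $u_{N-1}\equiv -1\pmod 4$ (using $\bar u_{N-2}=0$, since $N$ is even), and further $2$-adic expansion does not visibly terminate in a contradiction for $N\in\{4,6,\dots\}$. The paper closes this case with a nontrivial external input: from $\bar c=-1$ one sees that $\tilde 0$ has residue period $2$ with $(\tilde f_{2,-1}^{\,2})'(\tilde 0)=\tilde 0$, and then the Morton--Silverman period-reduction theorem (Lemma~\ref{lem:residue_period}) forces the characteristic-zero period $N$ to equal $2$. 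You should either invoke that result or supply a complete elementary substitute; the ``mod $4$'' sketch as written does not do the job.
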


\begin{rem}
Lemma~\ref{lem:simple_root} actually holds if $\zeta$ is any $d$th root of unity different from $1$, though the proof is somewhat more involved and we do not require this level of generality. We also note that $\zeta \ne 1$ is necessary: for example, if we take $x = 0$, $N = 1$, and let $d,M \ge 2$ be arbitrary, then $c = 0$ satisfies $f_{d,c}^{M-1}(0) = 0$, and one can check that $0$ is a multiple root of
	\[ \Psi_{M,1}^1(0,C) = \Phi_1(f_{d,C}^{M-1}(0),C) = \Phi_1(f_{d,C}^{M-2}(C),C) = \left( f_{d,C}^{M-2}(C) \right)^2 - f_{d,C}^{M-2}(C) + C. \]
\end{rem}

In order to prove Lemma~\ref{lem:simple_root}, we require the following description of the $C$-partials of the iterates of $f_{d,C}$. We omit the relatively simple proof by induction, but mention that the proof of the case $d = 2$ may be found in \cite[Lem. 3.3]{buff/lei:2014}.

\begin{lem}\label{lem:technical}
For $k \in \bbN$,
	\[
		\frac{\partial}{\partial C} f_{d,C}^k(X) = 1 + \sum_{j=1}^{k-1} d^j \cdot \prod_{i=1}^j f_{d,C}^{k-i}(X)^{d-1}.
	\]
\end{lem}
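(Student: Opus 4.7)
The plan is a direct induction on $k$. The base case $k = 1$ is immediate: since $f_{d,C}^1(X) = X^d + C$, we have $\partial_C f_{d,C}^1(X) = 1$, which matches the right-hand side because the sum $\sum_{j=1}^{0}$ is empty.

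For the inductive step, the key observation is the recursive identity $f_{d,C}^{k+1}(X) = f_{d,C}^k(X)^d + C$. Differentiating in $C$ and applying the chain rule gives
\[
\frac{\partial}{\partial C} f_{d,C}^{k+1}(X) = 1 + d \cdot f_{d,C}^k(X)^{d-1} \cdot \frac{\partial}{\partial C} f_{d,C}^k(X).
\]
Substituting the inductive hypothesis and distributing the factor $d \cdot f_{d,C}^k(X)^{d-1}$ across the bracketed expression produces two pieces: an isolated term $d \cdot f_{d,C}^k(X)^{d-1}$ (from the $1$ inside the brackets), which will serve as the $j = 1$ summand in the target formula for $k+1$; and a shifted sum in which each power of $d$ is incremented by one and every product acquires an extra factor of $f_{d,C}^k(X)^{d-1}$.

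The only real bookkeeping is the reindexing of the product. In the target formula for $k+1$, split the product $\prod_{i=1}^{j} f_{d,C}^{k+1-i}(X)^{d-1}$ as $f_{d,C}^k(X)^{d-1} \cdot \prod_{i=2}^{j} f_{d,C}^{k+1-i}(X)^{d-1}$, and reindex $i \mapsto i+1$ in the latter to obtain $\prod_{i=1}^{j-1} f_{d,C}^{k-i}(X)^{d-1}$. Under the correspondence $j \leftrightarrow j' + 1$ with $j' \in \{1, \ldots, k-1\}$, this is exactly the $j'$-th summand produced by applying the inductive hypothesis and multiplying through by $d \cdot f_{d,C}^k(X)^{d-1}$. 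I do not anticipate any genuine obstacle: the lemma follows purely from the one-step recursion $f_{d,C}^{k+1} = f_{d,C} \circ f_{d,C}^k$ together with the chain rule, and the reindexing step is the only place where a slip is possible.
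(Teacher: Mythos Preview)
Your proof is correct and follows exactly the approach the paper indicates: the paper omits the proof entirely, stating only that it is a ``relatively simple proof by induction'' (and referencing Buff--Lei for the $d=2$ case). Your base case and inductive step via the recursion $f_{d,C}^{k+1} = f_{d,C}^k(X)^d + C$ with the reindexing $j \leftrightarrow j'+1$ are precisely what is needed.
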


We also require the following special case of a result due to Morton and Silverman \cite[Thm. 1.1]{morton/silverman:1994}.  For a number field $F$, we will denote by $\OF$ the ring of integers of $F$.
\begin{lem}\label{lem:residue_period}
Let $F$ be a number field, and let $c \in \OF$. Let $p \in \bbZ$ be prime, let $\frakp \subset \OF$ be a prime ideal lying above $p$, and let $k_{\frakp} := \OF/\frakp$ be the residue field of $\frakp$. Suppose $P \in \OF$ has exact period $N$ for $f_{d,c}$, and suppose the reduction $\tilde{P} \in k_{\frakp}$ of $P$ has exact period $N'$ for $\tilde{f_{d,c}} \in k_{\frakp}[z]$. Then	\[
		N = N'  \text{ or } N = N'rp^e,
	\]
where $r$ is the multiplicative order of $\left(\tilde{f_{d,c}}^{N'}\right)'\left(\tilde{P}\right)$ in $k_{\frakp}$ and $e \in \bbZ_{\ge 0}$. In particular, if $\left(\tilde{f_{d,c}}^{N'} \right)'\left(\tilde{P}\right) = \tilde 0$, then $N = N'$.
\end{lem}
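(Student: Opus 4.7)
The plan is to reduce to the fixed-point case and then run a classical $\frakp$-adic Taylor-expansion argument. Reducing $f_{d,c}^N(P) = P$ modulo $\frakp$ shows $N' \mid N$, so setting $k := N/N'$ and $g := f_{d,c}^{N'}$ makes $P$ a point of exact period $k$ for $g$ whose reduction $\tilde P$ is a fixed point of $\tilde g$. Writing $\tilde\lambda := \tilde g'(\tilde P) = (\tilde f_{d,c}^{N'})'(\tilde P)$, with multiplicative order $r$ when $\tilde\lambda \ne \tilde 0$, it suffices to prove: either $k = 1$ (equivalently $N = N'$), or $k = rp^e$ for some $e \ge 0$; and moreover if $\tilde\lambda = \tilde 0$ then $k = 1$.

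Next I would carry out the standard $\frakp$-adic computation. Let $v$ denote the $\frakp$-adic valuation on $F$ and $\pi$ a uniformizer, and set $\Delta_i := g^i(P) - P \in \OF$, so $\Delta_0 = \Delta_k = 0$ and $v(\Delta_i) \ge 1$ for every $i$. Assume $k \ge 2$ (else there is nothing to show), and set $\ell := v(\Delta_1) \ge 1$ and $\lambda := g'(P) \in \OF$. The Taylor expansion $g(P + z) = g(P) + \lambda z + O(z^2)$ applied with $z = \Delta_i$ yields the recurrence $\Delta_{i+1} \equiv \Delta_1 + \lambda \Delta_i \pmod{\pi^{2\ell}}$, once one checks by induction that $v(\Delta_i) \ge \ell$ (so the quadratic error has valuation at least $2\ell$). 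Unrolling gives the geometric-sum formula
\[ \Delta_i \equiv \Delta_1 \cdot \bigl(1 + \lambda + \lambda^2 + \cdots + \lambda^{i-1}\bigr) \pmod{\pi^{2\ell}}. \]
Setting $i = k$ and dividing out $\Delta_1$ (which has valuation exactly $\ell$) forces $1 + \lambda + \cdots + \lambda^{k-1} \equiv 0 \pmod{\pi^\ell}$, so reducing mod $\pi$ yields the vanishing of this geometric sum in $k_\frakp$.

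A case analysis on $\tilde\lambda$ handles a first round. If $\tilde\lambda = \tilde 0$, the sum collapses to $\tilde 1 \ne \tilde 0$, a contradiction — so $k = 1$, which is the ``in particular'' claim. If $\tilde\lambda = \tilde 1$, the sum is $k \cdot \tilde 1$, forcing $p \mid k$ (with $r = 1$ here). If $\tilde\lambda \notin \{\tilde 0, \tilde 1\}$, the sum equals $(\tilde\lambda^k - \tilde 1)/(\tilde\lambda - \tilde 1)$, whose vanishing gives $\tilde\lambda^k = \tilde 1$, i.e.\ $r \mid k$.

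Finally, to promote ``$r \mid k$'' to ``$k/r$ is a power of $p$'', I would iterate: replace $g$ by $h := g^r$, under which $P$ has exact period $k/r$ and $\tilde h'(\tilde P) = \tilde\lambda^r = \tilde 1$. The argument applied to $h$ now lands in the $\tilde\lambda = \tilde 1$ sub-case, giving $k/r = 1$ or $p \mid k/r$; in the latter case replace $h$ by $h^p$ and repeat, always remaining in the ``multiplier equals $\tilde 1$'' regime. Since $k$ is a fixed positive integer this process terminates, producing $k/r = p^e$, hence $N = N' r p^e$. The main obstacle, modest as it is, lies in the bookkeeping of valuations in the Taylor recurrence: verifying inductively that $v(\Delta_i) \ge \ell$ is what lets the quadratic error stay inside $\pi^{2\ell}$ and the geometric-sum formula descend cleanly to the residue field.
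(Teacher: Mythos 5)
Your proof is correct. Note, though, that the paper does not prove this lemma at all: it is stated as a special case of Morton--Silverman \cite[Thm. 1.1]{morton/silverman:1994} and simply cited. What you have written is essentially the standard argument underlying that cited theorem, specialized to polynomials over $\OF$: reduce to the fixed-point case via $g = f_{d,c}^{N'}$, linearize $g$ at $P$ to get the geometric-sum congruence $\Delta_k \equiv \Delta_1(1 + \lambda + \cdots + \lambda^{k-1}) \pmod{\pi^{2\ell}}$, read off the constraint on $\tilde\lambda$ in the residue field, and iterate with $g^r$ and then $g^{rp}$ to pin down $k = rp^e$. All the delicate points check out: the divided Taylor coefficients of $g$ lie in $\OF$ because $g$ has $\OF$-coefficients and binomial coefficients are integers; the induction $v(\Delta_i) \ge \ell$ goes through since $v(\lambda) \ge 0$; dividing by $\Delta_1$ (valuation exactly $\ell \ge 1$) leaves the sum divisible by $\frakp$; and the exact period of $P$ under $g^r$ is $k/r$ because $r \mid k$. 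The $\tilde\lambda = \tilde 0$ case correctly yields the contradiction giving $N = N'$, which is the only part of the lemma actually invoked in the paper (in Case 2 of the proof of Lemma 3.6). So your proposal supplies a complete, self-contained proof of a statement the paper leaves to the literature.
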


\begin{proof}[Proof of Lemma~\ref{lem:simple_root}]
Since $\Psi_{M,N}^\zeta (X,C) = \Phi_N(\zeta f_{d,C}^{M-1}(X),C)$, and since $\Phi_N(X,C)$ divides $f_{d,C}^N(X) - X$, it suffices to show that $c$ is a simple root of the polynomial
	\[
		f_{d,C}^N(\zeta f_{d,C}^{M-1}(x)) - \zeta f_{d,C}^{M-1}(x) = f_{d,C}^{M + N - 1}(x) - \zeta f_{d,C}^{M-1}(x),
	\]
which is equivalent to showing that
	\[
		\frac{d}{dC} \left(f_{d,C}^{M + N - 1}(x) - \zeta f_{d,C}^{M-1}(x)\right)
	\]
does not vanish at $C = c$. By Lemma~\ref{lem:technical}, this is equivalent to showing that
	\begin{equation}\label{eq:delta}
		\delta := 1 - \zeta  + \sum_{j=1}^{M + N - 2} d^j \cdot \prod_{i=1}^j f_{d,c}^{N + M - 1 - i}(x)^{d-1} - \zeta \cdot \sum_{j=1}^{M - 2} d^j \cdot \prod_{i=1}^j f_{d,c}^{M - 1 - i}(x)^{d-1}
	\end{equation}
is nonzero. The conditions of the lemma imply that $\Phi_N(0,c) = 0$, so that $c \in \Zbar$, and therefore the condition $f_{d,c}^{M-1}(x) = 0$ implies that $x \in \Zbar$ as well. Thus, $\delta$ is an algebraic integer; let $F := \bbQ(x,c,\zeta)$, so that $\delta \in \OF$.

Suppose first that $d$ is not a prime power. Then $1 - \zeta$ is an algebraic unit. Since $\delta = 1 - \zeta + d \alpha$ for some $\alpha \in \OF$, we have
	\[
		\delta \equiv 1 - \zeta \not \equiv 0 \mod d\OF.
	\]
In particular, $\delta \ne 0$.

Now suppose that $d = p^k$ is a prime power, in which case $1 - \zeta$ is no longer an algebraic unit. Let $\frakp \subset \OF$ be a prime ideal lying above $p \in \bbZ$. Then $\frakp \cap \bbZ[\zeta] = (1 - \zeta)$ and $p\bbZ[\zeta] = (1 - \zeta)^r$, where $r = \phi(d) = p^{k-1}(p-1)$. Therefore,
	\[
		\ord_{\frakp}(d) = k \cdot \ord_{\frakp}(p)  = kp^{k-1}(p-1) \cdot \ord_{\frakp}(1- \zeta),
	\]
which is strictly greater than $\ord_{\frakp}(1 - \zeta)$ unless $k = 1$ and $p = 2$; that is, unless $d = 2$.

If $\ord_{\frakp}(d) > \ord_{\frakp}(1 - \zeta)$, then we again write $\delta = 1 - \zeta + d \alpha$ for some $\alpha \in \OF$ and find that $\ord_{\frakp}(\delta) = \ord_{\frakp}(1 - \zeta)$ is finite, hence $\delta \ne 0$. For the remainder of the proof, we take $d = 2$ and, therefore, $\zeta = -1$. Observe that the second sum appearing in \eqref{eq:delta} is empty if $M = 2$. We therefore consider the cases $M = 2$ and $M > 2$ separately.

\textbf{Case 1:} $M > 2$. In this case, we have
	\begin{align*}
		\delta
		&= 2  + \sum_{j=1}^{M + N - 2} 2^j \cdot \prod_{i=1}^j f_{2,c}^{N + M - 1 - i}(x) + \sum_{j=1}^{M - 2} 2^j \cdot \prod_{i=1}^j f_{2,c}^{M - 1 - i}(x) \\
		&= 2\left(1 + f_{2,c}^{N+M-2}(x) + f_{2,c}^{M-2}(x) + 2\alpha \right)
	\end{align*}
for some $\alpha \in \OF$. To show that $\delta \ne 0$, it suffices to show that
	\[
	\beta := 1 + f_{2,c}^{N+M-2}(x) + f_{2,c}^{M-2}(x) \not \in 2\OF.
	\]
We are assuming that $f_{2,c}^{M-1}(x) = 0$ has period $N$ for $f_{2,c}$, so also $f_{2,c}^{N+M-1}(x) = 0$. Hence
	\[ f_{2,c}(f_{2,c}^{M-2}(x)) = 0 = f_{2,c}(f_{2,c}^{N+M-2}(x)). \]
Since $f_{2,c}^{M-2}(x)$ and $f_{2,c}^{N+M-2}(x)$ are preimages of a common point --- namely, $0$ --- under $f_{2,c}$, we must have
	\[ f_{2,c}^{N+M-2}(x) = \pm f_{2,c}^{M-2}(x). \]
This means that $\beta - 1 = f_{2,c}^{N+M-2}(x) + f_{2,c}^{M-2}(x) \in 2\OF$, and therefore $\beta \not \in 2\OF$, as desired.

\textbf{Case 2:} $M = 2$. Since the second sum appearing in \eqref{eq:delta} is empty, we may write
	\begin{align*}
		\delta = 2\Big( \left(1 + f_{2,c}^N(x)\right) + 2\alpha \Big)
	\end{align*}
for some $\alpha \in \OF$. Let $\frakp \subset \OF$ be any prime lying above 2, and let $k_{\frakp}$ denote the residue field of $\frakp$.

Now suppose that $\delta = 0$. We will show that we must have $N = 2$, which yields precisely the exception $(M,N,d) = (2,2,2)$ in the statement of the lemma and completes the proof.

Since $\delta = 0$, we must have $1 + f_{2,c}^N(x) \in \frakp$; that is, in $k_{\frakp}$ we have $\tilde{f_{2,c}^N(x)} = \tilde{-1}$. Since $f_{2,c}^{N+1}(x) = 0 = f_{2,c}(x)$ by hypothesis, we have
	\[
		\tilde{0} = \tilde{f_{2,c}^{N+1}(x)} = \left(\tilde{f_{2,c}^N(x)} \right)^2 + \tilde{c} = \tilde{1+c},
	\]
so $\tilde{c} = \tilde{-1}$. Therefore the period of $\tilde{0}$ under $\tilde{f_{2,c}}$ is equal to 2, since
	\[ \tilde{f_{2,-1}}(\tilde{0}) = \tilde{0^2 - 1} = \tilde{-1} \mbox{ and } \tilde{f_{2,-1}}(\tilde{-1}) = \tilde{(-1)^2 - 1} = \tilde{0}. \]
Since $\left(\tilde{f_{2,-1}^2}\right)'(\tilde{0}) = \tilde{0}$, it follows from Lemma~\ref{lem:residue_period} that $0$ must have period $N = 2$ for $f_{2,c}$, as claimed.
\end{proof}

Combining Lemmas~\ref{lem:mult_root} and \ref{lem:simple_root} yields the following:

\begin{prop}\label{prop:small_preperiod}
Let $(x,M,N,d) \in K \times \bbZ^3$ with $M,N \ge 1$ and $d \ge 2$. Suppose that
	\[
		(x,M) \ne (0,1) \text{ and } (x,M,N,d) \not \in \{(\pm 1, 2, 2, 2)\}.
	\]
Then there exists $c \in K$ with $\Phi_{M,N}(x,c) = 0$ for which $x$ has preperiod equal to $M$ for $f_{d,c}$.
\end{prop}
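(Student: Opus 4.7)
The plan is a proof by contradiction. Assume $(x,M,N,d)$ satisfies the hypotheses of the proposition but also condition \eqref{eq:c2}. By Lemma~\ref{lem:smaller_preperiod}, every root $c$ of $\Phi_{M,N}(x,C) \in K[C]$ then satisfies $f_{d,c}^{M-1}(x) = 0$. This immediately disposes of the case $M = 1$: the condition becomes $x = f_{d,c}^0(x) = 0$, the excluded pair $(x,M) = (0,1)$. So I may assume $M \ge 2$.

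Next, I would combine Lemmas~\ref{lem:mult_root} and~\ref{lem:simple_root}. Fix a primitive $d$-th root of unity $\zeta$; since $d \ge 2$, $\zeta \ne 1$. Lemma~\ref{lem:mult_root} produces a multiple root $c \in K$ of $\Psi_{M,N}^{\zeta}(x,C)$. By the factorization in Lemma~\ref{lem:factors}, such a $c$ is also a root of $\Phi_{M,N}(x,C)$, so the standing hypothesis together with Lemma~\ref{lem:smaller_preperiod} gives $f_{d,c}^{M-1}(x) = 0$. Now Lemma~\ref{lem:simple_root} applies and forces $c$ to be a \emph{simple} root of $\Psi_{M,N}^{\zeta}(x,C)$, unless $(M,N,d) = (2,2,2)$. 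This contradiction leaves only the triple $(M,N,d) = (2,2,2)$ to consider.

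For the remaining case $(M,N,d) = (2,2,2)$, I would argue directly. Under \eqref{eq:c2}, every root $c$ of $\Phi_{2,2}(x,C)$ satisfies $f_{2,c}(x) = x^2 + c = 0$, so $c = -x^2$. By Lemma~\ref{lem:smaller_preperiod}, this $c$ makes $0$ periodic of exact period $2$ under $f_{2,c}$, so $f_{2,c}^2(0) = c^2 + c = 0$ with $c \ne 0$, forcing $c = -1$ and hence $x^2 = 1$, i.e., $x = \pm 1$. These are exactly the exceptional tuples $(\pm 1, 2, 2, 2)$ appearing in the statement, completing the proof.

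The main difficulty is engineering the two lemmas so that the \emph{multiple} root output of Lemma~\ref{lem:mult_root} collides with the \emph{simple} root conclusion of Lemma~\ref{lem:simple_root}; this requires choosing $\zeta$ primitive and working outside the exceptional triple $(2,2,2)$. Once that conflict is in place, handling the surviving triple is an elementary computation, essentially recapitulating the period-$2$ bifurcation at $c = -3/4$ already encountered in the introduction.
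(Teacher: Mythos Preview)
Your argument is correct and follows the same route as the paper: assume \eqref{eq:c2}, use Lemma~\ref{lem:smaller_preperiod} to dispose of $M=1$, then for $M\ge 2$ pit the multiple root from Lemma~\ref{lem:mult_root} against the simple root from Lemma~\ref{lem:simple_root} (with $\zeta$ primitive) to isolate $(M,N,d)=(2,2,2)$, and finish with the direct computation forcing $c=-1$ and $x=\pm 1$. The only cosmetic difference is that the paper reads off $c=-1$ from $\Phi_2(0,c)=c+1=0$, whereas you compute $f_{2,c}^2(0)=c^2+c=0$ and rule out $c=0$ by exact period $2$; these are equivalent. (Your closing remark about the bifurcation at $c=-3/4$ is a slight slip --- the relevant parameter here is $c=-1$ --- but this is commentary, not part of the argument.)
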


\begin{proof}
We prove the converse, so assume that there is no $c \in K$ satisfying $\Phi_{M,N}(x,c) = 0$ such that $x$ has preperiod equal to $M$ for $f_{d,c}$ --- that is, suppose $(x,M,N,d)$ satisfies condition \eqref{eq:c2}. We have already seen that if $M = 1$, then this assumption implies that $x = 0$.

For $M \ge 2$, it follows from Lemmas~\ref{lem:mult_root} and \ref{lem:simple_root} that $(M,N,d) = (2,2,2)$, so it remains only to show that $x \in \{\pm 1\}$. Let $c$ be a root of $\Phi_{2,2}(x,C)$. The sentence following \eqref{eq:c2} implies that
	\[ \Phi_2(0,c) = 0 = f_{2,c}(x). \]
Writing these expressions explicitly yields
	\[ c + 1 = 0 = x^2 + c, \]
and therefore $x = \pm 1$.
\end{proof}

\subsection{Proof of the main theorem}

We now combine the results of the previous sections to prove the main theorem.

\begin{proof}[Proof of Theorem~\ref{thm:main}]
Let $(x,M,N,d) \in K \times \bbZ^3$ with $M \ge 0$, $N \ge 1$, and $d \ge 2$. In the paragraphs immediately preceding the statement of Theorem~\ref{thm:main}, we verified that if $(x,M) = (0,1)$ or
	\[ (x,M,N,d) \in \left\{ \left(-\frac{1}{2},0,2,2 \right), \left(\frac{1}{2}, 1, 2, 2 \right), \left( \pm 1, 2, 2, 2 \right) \right\}, \]
then $x$ does not realize portrait $(M,N)$ in degree $d$.

Conversely, suppose $x$ does not realize portrait $(M,N)$ in degree $d$. By Lemma~\ref{lem:options}, this means that one of the following must be true:
\begin{enumerate}
\item for every root $c$ of $\Phi_{M,N}(x,C)$, $x$ has eventual period less than $N$; or
\item for every root $c$ of $\Phi_{M,N}(x,C)$, $x$ has preperiod less than $M$.
\end{enumerate}

If (A) is satisfied, then $(x,M,N,d) \in \{(-1/2,0,2,2), (1/2,1,2,2)\}$ by Proposition~\ref{prop:small_period}; if (B) is satisfied, then $(x,M) = (0,1)$ or $(x,M,N,d) \in \{(\pm 1, 2, 2, 2)\}$ by Proposition~\ref{prop:small_preperiod}.
\end{proof}

\section{Further questions}

One might ask the following more general question: Let $K$ be an algebraically closed field of characteristic zero, let $\calK := K(t)$ be the function field in one variable over $K$, and let $\phi_d(z) := z^d + t \in \calK[z]$. Let $(x,M,N,d) \in \calK \times \bbZ^3$ with $M \ge 0$, $N \ge 1$, and $d \ge 2$. Does there exist a prime $\frakp \in \Spec \calO_{\calK}$ such that, modulo $\frakp$, $\tilde{x}$ has portrait $(M,N)$ for $\tilde{\phi_d}$? Theorem~\ref{thm:main} answers this question when $x$ is chosen to be a constant point (i.e., $x \in K$), since reducing modulo a place of $\calK$ is equivalent to specializing $t$ to a particular element of $K$.

There are at least two tuples $(x,M,N,d)$ with $x \in \calK$ \emph{non}-constant for which the answer to the above question is negative: one can show that if
	\[
		(x,M,N,d) \in \{ (-t,1,1,2) , (t+1,1,2,2) \},
	\]
then there is no place $\frakp$ such that $\tilde{x}$ has portrait $(M,N)$ for $\tilde{\phi_d}$ (modulo $\frakp$). We do not know if there are any other such examples; however, it follows from the results of \cite{ghioca/nguyen/tucker:2015} that, for a fixed $d \ge 2$, the set of remaining examples is finite and effectively (though perhaps not \emph{practically}) computable.

Another direction one might pursue is to consider Question~\ref{ques:main} with $K$ an algebraically closed field of positive characteristic. In this case, the analogue of Baker's theorem (Theorem~\ref{thm:baker}) was proven by Pezda \cite{pezda:1994, pezda:1996, pezda:1998}. Pezda's theorem is more complicated than that of Baker, so it seems that a proof of the positive-characteristic analogue of Theorem~\ref{thm:main} would also be considerably more involved. Another obstacle is the fact that the polynomials $\Phi_N(X,C)$ are not generally irreducible in positive characteristic, so the methods of this article would require significant modifications if they are to be used to prove a version of the main theorem in positive characteristic.

\bibliography{C:/Dropbox/jdoyle}{}

\providecommand{\bysame}{\leavevmode\hbox to3em{\hrulefill}\thinspace}
\providecommand{\MR}{\relax\ifhmode\unskip\space\fi MR }
\providecommand{\MRhref}[2]{%
  \href{http://www.ams.org/mathscinet-getitem?mr=#1}{#2}
}
\providecommand{\href}[2]{#2}
\begin{thebibliography}{10}

\bibitem{baker:1964}
I.~N. Baker, \emph{Fixpoints of polynomials and rational functions}, J. London
  Math. Soc. \textbf{39} (1964), 615--622.

\bibitem{bousch:1992}
Thierry Bousch, \emph{Sur quelques probl\`{e}mes de dynamique holomorphe},
  Ph.D. thesis, Universit\'{e} de Paris-Sud, Centre d'Orsay, 1992.

\bibitem{buff/lei:2014}
Xavier Buff and Tan Lei, \emph{The quadratic dynatomic curves are smooth and
  irreducible}, Frontiers in Complex Dynamics: In Celebration of John Milnor's
  80th Birthday, Princeton University Press, 2014, pp.~49--71.

\bibitem{douady/hubbard:1985}
A.~Douady and J.~H. Hubbard, \emph{\'{E}tude dynamique des polyn\^omes
  complexes. {P}artie {II}}, Publications Math\'ematiques d'Orsay [Mathematical
  Publications of Orsay], vol.~85, Universit\'e de Paris-Sud, D\'epartement de
  Math\'ematiques, Orsay, 1985, With the collaboration of P. Lavaurs, Tan Lei
  and P. Sentenac.

\bibitem{gao}
Yan Gao, \emph{Preperiodic dynatomic curves for $z \mapsto z^d + c$}, Preprint.
  \texttt{arXiv:1304.4849}.

\bibitem{gao/ou}
Yan Gao and Yafei Ou, \emph{The dynatomic periodic curves for polynomial $z
  \mapsto z^d + c$ are smooth and irreducible}, Preprint.
  \texttt{arXiv:1304.4751}.

\bibitem{ghioca/nguyen/tucker:2015}
Dragos Ghioca, Khoa Nguyen, and Thomas~J. Tucker, \emph{Portraits of
  preperiodic points for rational maps}, Math. Proc. Cambridge Philos. Soc.
  \textbf{159} (2015), no.~1, 165--186.

\bibitem{hutz:2015}
Benjamin Hutz, \emph{Determination of all rational preperiodic points for
  morphisms of {PN}}, Math. Comp. \textbf{84} (2015), no.~291, 289--308.

\bibitem{kisaka:1995}
Masashi Kisaka, \emph{On some exceptional rational maps}, Proc. Japan Acad.
  Ser. A Math. Sci. \textbf{71} (1995), no.~2, 35--38.

\bibitem{lau/schleicher:1994}
Eike Lau and Dierk Schleicher, \emph{Internal addresses in the mandelbrot set
  and irreducibility of polynomials}, SUNY Stony Brook Preprint \textbf{19}
  (1994).

\bibitem{morton:1996}
Patrick Morton, \emph{On certain algebraic curves related to polynomial maps},
  Compositio Math. \textbf{103} (1996), no.~3, 319--350.

\bibitem{morton/silverman:1994}
Patrick Morton and Joseph~H. Silverman, \emph{Rational periodic points of
  rational functions}, Internat. Math. Res. Notices (1994), no.~2, 97--110.

\bibitem{morton/vivaldi:1995}
Patrick Morton and Franco Vivaldi, \emph{Bifurcations and discriminants for
  polynomial maps}, Nonlinearity \textbf{8} (1995), no.~4, 571--584.

\bibitem{pezda:1994}
T.~Pezda, \emph{Cycles of polynomials in algebraically closed fields of
  positive characteristic}, Colloq. Math. \textbf{67} (1994), no.~2, 187--195.

\bibitem{pezda:1996}
\bysame, \emph{Cycles of polynomials in algebraically closed fields of positive
  chracteristic. {II}}, Colloq. Math. \textbf{71} (1996), no.~1, 23--30.

\bibitem{pezda:1998}
\bysame, \emph{Cycles of rational mappings in algebraically closed fields of
  positive characteristics}, Ann. Math. Sil. (1998), no.~12, 15--21, Number
  theory (Cieszyn, 1998).

\bibitem{silverman:2007}
Joseph~H. Silverman, \emph{The arithmetic of dynamical systems}, Graduate Texts
  in Mathematics, vol. 241, Springer, New York, 2007.

\end{thebibliography}
\bibliographystyle{amsplain}

\end{document}